\numberwithin{equation}{section}
\newtheorem{theorem}[equation]{Theorem}
\newtheorem{lemma}[equation]{Lemma}
\newtheorem{proposition}[equation]{Proposition}
\theoremstyle{definition}
\newtheorem{definition}[equation]{Definition}
\newtheorem{remark}[equation]{Remark}
\theoremstyle{remark}
\newcommand{\C}{\mathbb{C}}
\newcommand{\Pp}{\mathbb{P}}
\newcommand{\Ff}{\mathcal{F}}
\newcommand{\CC}{\mathcal{C}}
\newcommand{\N}{\mathcal{N}}
\newcommand{\T}{\mathcal{T}}
\newcommand{\Oc}{\mathcal{O}}
\newcommand{\X}{\mathcal{X}}
\begin{document}
\title[On the branch curve of a general projection of a surface to a plane]%
{On the branch curve of a general projection of a surface to a plane}%
\author{C. Ciliberto, F. Flamini}

\email{cilibert@mat.uniroma2.it} \curraddr{Dipartimento di
Matematica, Universit\`a degli Studi di Roma Tor Vergata\\ Via
della Ricerca Scientifica - 00133 Roma \\Italy}

\email{flamini@mat.uniroma2.it} \curraddr{Dipartimento di
Matematica, Universit\`a degli Studi di Roma Tor Vergata\\ Via
della Ricerca Scientifica - 00133 Roma \\Italy}

\thanks{{\it Mathematics Subject Classification (2000)}: 14N05, 14E20, 14E22; 
(Secondary) 14J10, 14E05. \\ {\it Keywords}: projective surfaces; general projections; 
coverings; branch curves; singularities.  
\\
The  authors are members of G.N.S.A.G.A.\ at
I.N.d.A.M.\ ``Francesco Severi''.}

\begin{abstract}  In this paper we prove that the branch curve of a general projection
of a surface to the plane is irreducible, with only nodes and cusps. 
\end{abstract}

\maketitle


\noindent
\centerline{{\em }}

\section{Introduction}\label{S:Intro}

A few years ago the first author wrote, in collaboration with R. Miranda and M. Teicher, the paper \cite {CMT}, in which the following theorem, assumed to be well know, was stated and used:

\begin{theorem}\label {thm:main} Let $S\subset \Pp^ r$ be a smooth, irreducible, projective surface (or a \emph{general surface} in $\Pp^ 4$).
Then the ramification curve on $S$ of a general projection of $S$ to $\Pp^ 2$ is smooth and irreducible and the branch  curve in the plane  is also irreducible and has only nodes and cusps, respectively corresponding to two simple ramification points and one double ramification point.
\end{theorem}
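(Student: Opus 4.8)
The plan is to realize a general projection as projection from a general center $L\cong \Pp^{r-3}$, i.e.\ a general point of the Grassmannian $\G=\G(r-3,r)$, and to prove each assertion by exhibiting the corresponding ``bad locus'' in $\G$ as a proper closed subset. For $L$ general and disjoint from $S$ the projection $\pi=\pi_L\colon S\to \Pp^2$ is a finite morphism of degree $d=\deg S$, ramified exactly along $R=\{x\in S:\ \mathbb{T}_xS\cap L\neq\emptyset\}$, where $\mathbb{T}_xS\cong\Pp^2$ is the embedded projective tangent plane. Since $\dim \mathbb{T}_xS+\dim L=r-1$, this is a Schubert-type degeneracy condition, and by the ramification formula $R\equiv K_S+3H$, with $H$ the hyperplane class. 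I would first set up, for every undesired phenomenon, an incidence variety over $\G$ — with one marked point $x\in S$ for the local conditions (non-fold ramification, pleat of the wrong order) and with a pair $(x_1,x_2)\in S\times S$ for the coincidence conditions (nodes, triple points, tacnodes) — and compute its dimension by projecting to $S$ or $S\times S$, where the fibres are linear Schubert conditions on $L$. The whole result then reduces to showing that each bad incidence maps to $\G$ with image of dimension $<\dim\G$.

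For the local structure I would stratify points of $S$ by the order of contact of $L$ with the tangent and osculating spaces of $S$ at $x$, reading off the analytic type of $\pi$ from the first and second fundamental forms. At a generic ramification point $\mathbb{T}_xS$ meets $L$ in a single point not aligned with the image of the second fundamental form $\mathrm{II}_x$, and $\pi$ is a fold $(u,v)\mapsto(u,v^2)$; the crucial nondegeneracy input is that the Gauss map of the smooth surface $S$ is finite (Zak), so that the family $x\mapsto\mathbb{T}_xS$ does not collapse. Forcing the contact with $\mathrm{II}_x$ to jump produces a pleat $(u,v)\mapsto(u,v^3-uv)$ at finitely many points, whose image is an ordinary cusp; higher contact, and the vanishing of $d\pi$ (rank $0$), are excluded by the dimension counts above. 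The curve $R$ is the zero divisor of the section $\wedge^2(d\pi)$ of the line bundle $K_S+3H$; its smoothness follows by showing this section is everywhere transverse for general $L$, the pleats being the finitely many points at which the smooth curve $R$ is tangent to a fibre of $\pi$. Irreducibility follows from connectedness of the general ramification divisor — a Lefschetz-type property of the total family over $\G$ — together with this smoothness.

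Finally, since $R$ is smooth and $\pi|_R\colon R\to B:=\pi(R)$ is birational onto the branch curve, the singularities of $B$ are exactly the images of (i) the pleats, each contributing one cusp, located where $\pi|_R$ ramifies — the promised ``one double ramification point'' — and (ii) the pairs $x_1\neq x_2$ with $\pi(x_1)=\pi(x_2)$, two fold points whose smooth image branches must be shown to cross transversally, giving a node — the ``two simple ramification points''. The dimension counts must simultaneously exclude, for general $L$, triple points of $B$ (three branches of $R$ with a common image), tacnodes (tangential coincidence of two fold branches), and the collision of a node with a cusp, leaving only nodes and cusps. The main obstacle, and the reason the folklore statement requires a genuine proof, is twofold: establishing the local normal forms \emph{algebraically}, without invoking the differential-topological stability theory of Whitney and Mather, so that they hold over the given field and vary correctly in families; and controlling the boundary case of low $r$, in particular a general surface in $\Pp^4$, where one projects from a line and the available codimension in $\G$ is minimal, so that the nondegeneracy of the osculating spaces — which can fail for special surfaces — must be used in an essential way.
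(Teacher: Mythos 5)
Your outline is the classical ``incidence variety plus dimension count'' strategy, and as a plan it is reasonable; but it has a genuine gap exactly at the point this paper is written to address. When you project a bad incidence variety (tritangencies, tacnodal pairs, higher-order contacts) to $S$, $S\times S$ or $S\times S\times S$, the fibres are Schubert conditions on $L$ whose codimension you can only compute for a \emph{general} configuration of points and tangent planes; over special configurations the codimension drops, and a priori the locus of such configurations could be large enough for the bad incidence to dominate $\G$. Concretely: nothing in your argument excludes a surface carrying a two--dimensional (hence filling) family of tritangent lines, or a two--dimensional family of lines with contact of order $\geq 3$ at a point. The finiteness of the Gauss map and the nondegeneracy of the second fundamental form at a general point control only the local normal form at one unconstrained point, not these global excess families; you flag this yourself as ``the main obstacle'' but offer no mechanism to overcome it, so the proposal does not close. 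The paper's resolution is the focal machinery: after first reducing to a surface $\Sigma\subset\Pp^3$ with ordinary singularities (the General Projection Theorem \ref{thm:genpro}) and projecting from a point, one observes that on the general line of any filling family of lines in $\Pp^3$ the focal scheme has length exactly $2$ (Proposition \ref{prop:tuttofuoco}), while every contact point of a tangent line with $\Sigma$ is a focus and a contact of order two forces a focus of multiplicity two (Proposition \ref{prop:foctang}); hence the total branching weight along a general member of a filling family is at most $2$, which kills triple points, tacnodes and higher-order contacts simultaneously, reducing everything to the two allowed local pictures (Propositions \ref{prop:multiplebranch} and \ref{prop:doublebranch}).

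A secondary soft spot: your irreducibility of $R$ via ``a Lefschetz-type property of the total family over $\G$'' is asserted, not proved. The paper gets connectedness (hence, given smoothness, irreducibility) from adjunction theory: $R\sim K_X+3H$ with $K_X+2H$ nef and $H$ ample, so $R$ is ample and therefore connected. A monodromy argument of the kind you gesture at could probably be made to work, but it needs to be supplied, and in low $r$ (your own worry) the relevant positivity is exactly what the adjunction-theoretic statement provides.
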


This means that the ramification of the general projection morphism $S\to \Pp^ 2$ is \emph{as simple as possible}. 
This result has been stated as a fact by various classical authors (see \cite {enr}), and in fact it is extremely 
useful in various aspects of the theory of surfaces, like Hilbert scheme and moduli space computations \`a la 
Enriques (see \cite {enr}, Chapter V, \S 11), braid monodromy computations 
(see, e.g., \cite {teic}, \cite {kul1}, \cite {kul2}) and in number theoretical problems concerning 
algebraic varieties (see, e.g. \cite{Fal}. 

An anonymous referee  kindly remarked that  there was  
no proof of Theorem \ref {thm:main} in the current literature,  despite good evidences 
for its thruth, given the fact that the result holds under the hypothesis that $S$ is the 
$k$--tuple Veronese embedding of a smooth surface, with $k\geq 2$ (see \cite {kulkul}; in \cite{Fal} the same result is 
proved under even stronger hypotheses, verified when $k \geq 5$). 

The purpose of the present note is to fill up this annoying gap in the literature, by giving a proof 
of Theorem \ref {thm:main}. Actually  we will prove the following more general result:

\begin{theorem}\label{thm:main2} Let $\Sigma \subset \Pp^3$ be a non--degenerate, projective surface, 
with ordinary singularities. Then the ramification curve of a general projection of $\Sigma$ to a plane from a point $p$ is irreducible, the branch curve is also irreducible and 
 has only nodes and cusps, respectively corresponding to bitangent,  and not tritangent, lines and simple 
asymptotic tangent lines to $\Sigma$ passing through $p$. \end{theorem}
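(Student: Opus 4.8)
The plan is to work on the normalization $\nu\colon\hat\Sigma\to\Sigma$, which is smooth because the singularities of $\Sigma$ are ordinary, and to study the finite degree $d=\deg(\Sigma)$ morphism $f=\pi_p\circ\nu\colon\hat\Sigma\to\Pp^2$. A point $\hat x\in\hat\Sigma$ lying over a smooth point $x=\nu(\hat x)$ is a ramification point of $f$ precisely when the line $\overline{px}$ lies in the tangent plane $T_x\Sigma$, i.e. when $p\in T_x\Sigma$. Writing $g\colon\hat\Sigma\dashrightarrow(\Pp^3)^\ast$ for the Gauss map $\hat x\mapsto T_x\Sigma$ and $p^\ast\subset(\Pp^3)^\ast$ for the plane parametrizing the planes through $p$, this says exactly that the ramification divisor is $\tilde R_p=g^{-1}(p^\ast)$. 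As $p$ runs over $\Pp^3=(\Pp^3)^{\ast\ast}$ the plane $p^\ast$ runs over all of $|\Oc_{(\Pp^3)^\ast}(1)|$, so the curves $\tilde R_p$ are exactly the members of the linear system $g^\ast|\Oc(1)|$, equivalently the restrictions to $\hat\Sigma$ of the first polars of $\Sigma$ with respect to $p$.

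First I would prove irreducibility. Since $\Sigma$ is non--degenerate with only ordinary singularities, it is neither a cone nor a tangent developable (both of which have non--ordinary singular loci), so its dual $\Sigma^\ast=\overline{g(\hat\Sigma)}$ is a surface and $g$ is generically finite. The Gauss map is a morphism off the finitely many points of $\hat\Sigma$ over the pinch points of $\Sigma$, where the normal direction degenerates; hence $g^\ast|\Oc(1)|$ has no fixed component and only finitely many base points. By Bertini's irreducibility theorem --- the general member of a linear system whose associated map has two--dimensional image is irreducible --- the general curve $\tilde R_p$ is irreducible. The branch curve $B=f(\tilde R_p)$ is then irreducible as well, being the image of an irreducible curve under the finite map $f$.

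Next I would identify the local models. In affine coordinates adapted to a ramification point, a simple tangent line (contact exactly two, second fundamental form non--degenerate on the direction) makes $f$ a Whitney fold $(u,v)\mapsto(u,v^2)$, whose branch is smooth, while a simple asymptotic tangent line (contact exactly three, i.e. an asymptotic direction with non--vanishing cubic term) makes $f$ a pleat $(u,v)\mapsto(u,v^3-uv)$, whose branch is an ordinary cusp. A bitangent line, simply tangent at two distinct points, produces two folds whose branches meet; for general $p$ the two tangent directions are independent, so the two smooth branches of $B$ cross transversally and give a node. This already matches nodes with bitangent lines and cusps with asymptotic tangent lines.

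Finally --- and this is where the real work lies --- I would show that these are the only singularities of $B$, by classifying all lines through $p$ by their contact with $\Sigma$ and ruling out every other possibility for general $p$. Since the lines through a fixed point form a $2$--plane in $G(1,3)$, any family of lines sweeping a locus of dimension $\le 1$ in $G(1,3)$ is avoided by a general $p$. A direct dimension count shows that the tangent lines form a threefold, the bitangent lines a surface, and the tritangent lines, the flecnodal (contact $\ge 4$) tangent lines, and the lines carrying a simultaneous simple and asymptotic tangency each form a curve in $G(1,3)$. Hence for general $p$ there are no tritangent lines (so every node comes from a bitangent, and not tritangent, line), no contact of order $\ge 4$ (so no cusp worse than ordinary and no node--cusp collision), and the ramification is as described. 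The most delicate point is the interaction with $\Sing(\Sigma)$: I must verify that the finitely many lines through $p$ meeting the double curve $\Gamma$ tangentially, together with the lines through the pinch and triple points, contribute only smooth points to $B$, so that $f(\Gamma)$ merely crosses $B$ transversally and in particular the pinch points --- which are forced base points of $g^\ast|\Oc(1)|$ --- give simple folds. Once this is established, $\tilde R_p$ is smooth, the restriction $f|_{\tilde R_p}$ is the normalization of $B$, and $B$ carries exactly the claimed nodes and cusps.
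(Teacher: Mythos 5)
Your overall architecture --- reduce everything to showing that every line through a general centre $p$ has total branching weight at most $2$, then read off the local normal forms (fold, pleat, two transverse folds) --- is essentially the paper's, and your Gauss--map/Bertini argument for the irreducibility of the ramification curve is a legitimate alternative to the paper's route: the paper instead notes $R\sim K_X+3H$, invokes adjunction theory to see that $R$ is ample, hence connected, and deduces irreducibility from smoothness. Your version, via Jouanolou--Bertini applied to the pullback of hyperplanes of $(\Pp^3)^{\ast}$ under the Gauss map (equivalently, the moving part of the polar system), is arguably more self-contained, provided you are careful that the system you apply Bertini to is the polar system with the double curve removed as fixed part. The local analysis of simple folds, pleats and transverse bitangencies likewise matches the explicit coordinate computation carried out in the proof of Proposition \ref{prop:doublebranch}.

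The genuine gap is the sentence ``A direct dimension count shows that \dots the tritangent lines, the flecnodal (contact $\ge 4$) tangent lines, and the lines carrying a simultaneous simple and asymptotic tangency each form a curve in $G(1,3)$.'' This is exactly the statement whose absence from the literature motivated the paper, and it is not a formal consequence of counting conditions: a priori the congruence of bitangent lines could contain a two-dimensional component consisting entirely of tritangent lines, or the asymptotic tangent lines could all have contact order $\ge 3$ along a two-dimensional family, and nothing in your proposal rules this out for an arbitrary surface with ordinary singularities. The paper's entire technical core (\S\ \ref{S:fuochi} and \S\ \ref{S:fillingfam}) exists precisely to close this: if lines of branching weight $\ge 3$ passed through the general point of $\Pp^3$ they would form a \emph{filling} family; Proposition \ref{prop:tuttofuoco} shows the general line of a filling family carries a focal scheme of length exactly $2$; and Proposition \ref{prop:foctang} shows that each contact point of such a line with $\Sigma$ is a focus, that a contact of order $2$ forces a focus of multiplicity $2$, and that contact of order $\ge 3$ is impossible --- whence the branching weight is at most $2$. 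You need either this focal argument or a genuine substitute (Kulikov--Kulikov make the naive dimension count work only under strong positivity hypotheses, e.g.\ for $k$-tuple Veronese re-embeddings with $k\ge 2$); as written, the decisive step of your proof is asserted rather than proved. The same remark applies, to a lesser degree, to the behaviour along $\Sing(\Sigma)$, which you flag as ``to be verified'' but do not carry out; the paper disposes of it in Remark \ref{rem:pinch} via the local structure of the polar system at pinch and triple points.
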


As indicated in \cite[Lemma 1.4]{kulkul}, Theorem \ref{thm:main} (respectively, Theorem \ref{thm:main2})  
can be easily extended to the case in which  $S$ (respectively, $\Sigma$) has mild singularities (respectively, 
away from the ordinary singularities): see Remark \ref{rem:kulkul}. 

The proof of Theorem \ref {thm:main2} follows a pattern which is in principle not so different from the one 
in \cite {kulkul}. It consists in the study of families of multi-tangent lines to the surface $\Sigma$. The 
new tool with respect to \cite{kulkul} is the use of some basic techniques in 
projective--differential geometry, i.e. the classical theory of {\em foci}. The use of the focal toolkit 
simplifies the approach and enables us to prove the result without any additional assumption.

In \S \ref {S:NotPre1} we recall, for the reader's convenience, some basic notions and results about projections
of surfaces and the singularities that they produce. In \S \ref {S:NotPre} we recall the relations between the 
singularities of the branch curve of a general projection and the tangent lines to the surface passing through 
the centre of projection (see also \cite {kulkul}).
The upshot of the first two sections is to reduce the proof of Theorem  \ref {thm:main} to the one of 
Theorem \ref {thm:main2} and to reduce this, in turn, to the proof that the branch curve has only double points. 
In \S \ref {S:fuochi} we recall some generalities about focal schemes, a classical projective--differential 
subject revived in recent times in \cite{CS}, and successfully used in several contexts (see \cite {CC}). 
In \S \ref {S:fillingfam} the focal machinery is applied to prove Proposition \ref  {prop:foctang}, a result certainly known to
the  classics, for which however  we do not have
a suitable reference. In the short \S  \ref {S:Main} we collect all the information and give the
proof of Theorem \ref {thm:main2}  and Theorem \ref {thm:main}. 

We finish with \S \ref{S:considerations}, in which we outline the aforementioned application  \`a la 
Enriques of Theorem \ref {thm:main} to the study of the dimension of Hilbert schemes and moduli spaces of surfaces.

\section{Ordinary singularities and projections}\label{S:NotPre1}  

This section  is devoted to recall, for the reader's convenience, some basic facts
about general projections to $\Pp^ 3$ of smooth surfaces sitting in higher dimensional
projective spaces.  

\subsection{Ordinary singularities} \label {SS:ordsing} We start with a classical definition.

\begin{definition}\label{def:gensurf3} An irreducible, projective surface $\Sigma \subset \Pp^3$ is 
said to have {\em ordinary singularities} if its singular locus is either empty or it is  a 
curve $\Gamma$, called the {\em double curve} of $\Sigma$,  with the following properties:

\begin{enumerate}
\item $\Gamma$ has at most finitely many \emph{ordinary triple points}, such that
 the germ of $\Sigma$ there
is analytically equivalent to the one of the affine surface in $\C^3$ with equation 
$xyz=0$ at the origin;

\item every non--singular point of $\Gamma$ is either a {\em nodal point}, i.e., 
 the germ of $\Sigma$ there
is analytically equivalent to the one of the surface with equation 
$x^ 2-y^ 2=0$ at the origin, or a {\em pinch point}, i.e. 
 the germ of $\Sigma$ there
is analytically equivalent to the one of the surface with equation 
$x^ 2-zy^ 2=0$ at the origin;

\item for every irreducible component $\Gamma'$ of $\Gamma$, the general point of $\Gamma'$ is a 
nodal point of $\Sigma$, in particular, there are only finitely many pinch points for $\Sigma$.

\end{enumerate}
\end{definition}

\begin{remark}\label{rem:aiuto} Assume $\Sigma \subset \Pp^3$ has ordinary singularities and let  $\nu: X\to \Sigma$ be its normalization. It is immediate to see that $X$ is smooth and the line bundle 
$\nu^ *(\mathcal O_{\Pp^ 3}(1))$ is ample.

Let $\Delta=\nu^ {-1}(\Gamma)$. Then:
\begin{itemize}
\item[(i)] $\nu{\vert _{\Delta}}$ is a generically $2:1$ covering;

\item[(ii)] if $p \in \Gamma$ is a triple point, then 
$\nu^{-1}(p) := \{ p_1, p_2,p_3\}$,
where each $p_i$ is a node of $\Delta$, $1 \leq i \leq 3$. 
We will denote by $T$ the set of these nodes,
which are the only singular points of $\Delta$;

\item[(iii)] If $q \in \Gamma$ is a pinch point, then $q$ is a branch point of 
$\nu{\vert _{\Delta}}$, over which $\Delta$ is smooth.
We will denote by $\Omega$ the set of corresponding ramification points on $\Delta $.   
\end{itemize}
\end{remark}

\subsection {Projections} \label {SS:proj} It is a classical fact that surfaces with ordinary singularities occur as general projections in $\Pp^ 3$ of smooth surfaces in higher dimensional projective spaces. 
It is useful to recall the basic results on this subject. 

Let $S \subset \Pp^r$, $r\geq 3$, be a smooth, irreducible, non--degenerate
projective surface.   
For any $k <r$, we denote by 
$\varphi_k : S \to \Pp^k$ the projection of $S$ from a general linear 
subspace  of dimension $r-k-1$ of  $\Pp^r$. If $h<k$, we may
assume that $\varphi_h$ factors through $\varphi_k$. 

If $r \geq 6$, then
$\varphi_5 : S \to \Pp^5$ maps $S$ isomorphically to a smooth surface in  $\Pp^5$, 
since the secant variety ${\rm Sec}(S)$ does not fill up $\Pp^ r$. Projections
to $\Pp^ 4$ and $\Pp^ 3$ no longer preserve smoothness. 

\begin{definition}\label{def:gensurf4} (cf. \cite[Def.1]{MP}) A non-degenerate, irreducible, projective surface $\Sigma \subset \Pp^4$ is called a {\em general surface} of  $\Pp^4$ if either $\Sigma$ is smooth or the singularities of 
$\Sigma$ are at most a finite number of {\em improper double points}, i.e.  the origin of two smooth branches of $\Sigma$ with independent tangent planes. 
\end{definition}

The normalization of a general surface in $\Pp^ 4$ is smooth.

A general projection of a smooth, irreducible, non--degenerate surface $S\subset \Pp^ r$, $r\geq 5$, to 
$\Pp^ 4$ is a general surface of $\Pp^ 4$: its improper double points correspond to the
intersections of the centre of projection with ${\rm Sec}(S)$ and their number is given by the \emph{double point formula} (see \cite {Sev}, \cite {Ful}).  More precisely, one has (cf. \cite{Sev} and \cite[Thm.3]{MP}):

\begin{theorem}\label{thm:Sev} The only 
smooth surface $S\subset \Pp^ 5$ whose general projection to $\Pp^ 4$ is smooth
is  the Veronese surface of conics.  
\end{theorem}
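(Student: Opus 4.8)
The plan is to reinterpret smoothness of the general projection as a defectivity condition on the secant variety of $S$, and then to recognise the resulting statement as Severi's classical characterisation of the Veronese surface. First I would observe that the improper double points of the projection $\varphi_4\colon S\to\Pp^4$ from a general point $p\in\Pp^5$ are in bijection with the secant lines of $S$ passing through $p$: such a secant, meeting $S$ in two distinct points $a,b$, is collapsed by $\varphi_4$ and produces the origin of two smooth branches with independent tangent planes, while a tangent line through $p$ would instead cause $\varphi_4$ to fail to be an immersion. Since $\mathrm{Tan}(S)\subseteq\mathrm{Sec}(S)$, the projection is a smooth embedding exactly when $p\notin\mathrm{Sec}(S)$; as $\mathrm{Sec}(S)$ has expected dimension $5$, this happens for the general $p$ if and only if $\mathrm{Sec}(S)$ is defective, i.e.\ $\dim\mathrm{Sec}(S)\le 4$.

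For the easy implication, the Veronese surface $v_2(\Pp^2)\subset\Pp^5=\Pp(\mathrm{Sym}^2 V)$ has secant variety equal to the determinantal cubic $\{\det=0\}$ of symmetric $3\times 3$ matrices of rank $\le 2$, which is a proper $4$--dimensional subvariety, so its general projection to $\Pp^4$ is smooth. For the converse I would apply Terracini's lemma: for general $x,y\in S$ and general $z$ on the secant $\langle x,y\rangle$ one has $T_z\mathrm{Sec}(S)=\langle T_xS,T_yS\rangle$. Defectivity forces $\dim\langle T_xS,T_yS\rangle\le 4$, so the two embedded tangent planes, each a $\Pp^2$ in $\Pp^5$, must meet rather than spanning the whole ambient space.

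The heart of the argument, and the step I expect to be the main obstacle, is the rigidity that promotes this pairwise incidence of tangent planes to a global identification of $S$ with the Veronese. I would fix a general $x$ and consider the tangential projection $\pi_x\colon\Pp^5\dashrightarrow\Pp^2$ with centre $T_xS$: since $T_yS$ meets $T_xS$ for general $y$, the differential of $\pi_x|_S$ drops rank everywhere, so the image $\overline{\pi_x(S)}$ is a rational curve and the fibres of $\pi_x|_S$ sweep out $S$ in a pencil of curves, the classical \emph{contact curves}. A degree computation should show these fibres are conics through $x$, that through two general points of $S$ there passes a unique such conic, and that the resulting $2$--dimensional family of conics, together with $\deg S=4$ and the divisibility of the hyperplane class by $2$ in $\Pic(S)$, forces $S\cong v_2(\Pp^2)$. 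This final rigidification is the delicate point, being precisely where infinitesimal data must be upgraded to a global isomorphism; one could instead quote Zak's classification of Severi varieties to conclude, at the cost of substantially heavier machinery.
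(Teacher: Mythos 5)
First, a point of comparison: the paper does not prove Theorem \ref{thm:Sev} at all --- it is quoted as a classical result, with references to \cite{Sev} and \cite[Thm.~3]{MP}, and the only commentary offered is exactly your opening reduction, namely that the statement is equivalent to the Veronese surface being the only smooth surface in $\Pp^5$ with ${\rm Sec}(S)\neq \Pp^5$. So any complete argument is necessarily ``different from the paper's'', and the right benchmark is whether your sketch would actually close the proof on its own.

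The first half of your proposal does: the identification of improper double points with secants through the centre of projection, the equivalence between smoothness of the general projection and $\dim {\rm Sec}(S)\leq 4$, the determinantal description of the secant variety of the Veronese, and the Terracini step forcing $T_xS\cap T_yS\neq\emptyset$ for general $x,y\in S$ are all correct and standard. The gap is in the endgame. You invoke $\deg S=4$ and the divisibility of the hyperplane class by $2$ in $\Pic(S)$ as if they were available inputs, but both are consequences of the classification you are trying to establish, not hypotheses: nothing in the defectivity assumption hands you the degree of $S$. What actually has to be proved is that the general entry locus (equivalently, the general fibre of the tangential projection $\pi_x|_S$) is an irreducible conic --- which requires controlling the scheme $S\cap T_xS$ and the degree of the image curve of $\pi_x|_S$, neither of which you address --- and then that the resulting two--dimensional family of conics separates points and exhibits $S$ as the image of $\Pp^2$ under the complete linear system of conics. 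As you yourself note, this is the delicate point, and as written it is asserted rather than proved. The escape route you offer, quoting Zak's classification of Severi varieties in the case $n=2$, is not circular, but in that dimension Zak's statement \emph{is} the theorem at hand, so it reduces your proof to a citation --- which is, in fairness, precisely what the paper does.
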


This is the same as saying that the Veronese surface of conics  is the only smooth \emph{defective} surface $S$ in $\Pp^ 5$, i.e. such that $\dim({\rm Sec}(S))<5$. 

As for projections to $\Pp^ 3$, it was classically stated by various authors, like M. Noether, F. Enriques etc.,  that the general projection of a smooth surface to $\Pp^ 3$ has only ordinary singularities  (see, e.g., \cite {enr}, and \cite {GH2} for a modern reference). A  more precise result is the following (see  \cite[Thm.8]{MP}):

\begin{theorem}\label{thm:genpro} (The General Projection Theorem)  Let $\Sigma$ be a 
general surface of $\Pp^4$. Then a general projection of $\Sigma$ to $\Pp^3$ has ordinary singularities. 
\end{theorem}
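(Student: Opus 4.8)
The plan is to reduce the statement to a local analysis of the composite map and to control, by dimension counts in the Grassmannian of lines, how the singularities of the image are produced. Let $\nu\colon X\to \Sigma$ be the normalization, which is smooth by the remark following Definition \ref{def:gensurf4}, and set $f=\nu$ regarded as a map $X\to\Pp^4$; since the only singularities of $\Sigma$ are improper double points, $f$ is an immersion whose sole failure of injectivity occurs at the finitely many pairs of points lying over these double points, where two smooth branches cross transversally (when $\Sigma$ is smooth, $f$ is an embedding). Projecting from a general point $p$ gives $g=\pi_p\circ f\colon X\to \Pp^3$, a finite morphism, birational onto $\Sigma'=\pi_p(\Sigma)$ because, as the counts below show, the lines through $p$ secant to $\Sigma$ form only a one–dimensional family. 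I would first record that the singular locus of $\Sigma'$ comes from exactly three geometric sources: (a) the \emph{double locus}, the images of pairs $x\neq y$ in $X$ with $f(x)$, $f(y)$, $p$ collinear, together with the images of the improper double points of $\Sigma$; (b) the \emph{ramification}, the points $x$ where the line $\overline{p\,f(x)}$ lies in the embedded tangent plane $T_{f(x)}\Sigma$, so that $dg_x$ drops rank; and (c) the \emph{triple points}, coming from lines through $p$ meeting $\Sigma$ in three distinct points. The task is then to show that for general $p$ each source produces only the analytic models of Definition \ref{def:gensurf3}.

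The heart of the argument is a sequence of dimension counts in $\G(1,4)$ (of dimension $6$), using that a general surface of $\Pp^4$ is non–defective. The secant lines form a family of dimension $4$ (the image of the secant map on $X\times X$), so intersecting with the $\Pp^3$ of lines through $p$ yields, for general $p$, a purely one–dimensional family: hence the double locus is a curve and $g$ is birational. The tangent variety ${\rm Tan}(\Sigma)=\bigcup_x T_{f(x)}\Sigma$ has dimension $4$ for a general $\Sigma$ — this non–defectivity is the analogue, in $\Pp^4$, of the phenomenon behind Theorem \ref{thm:Sev} — so the condition $p\in T_{f(x)}\Sigma$ has only finitely many solutions $x$, giving finitely many ramification points; similarly the trisecant lines form a family ${\rm Tri}(\Sigma)$ of dimension $3$, meeting the lines through general $p$ in finitely many points and giving finitely many triple points. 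The same counts must then be pushed one step further to \emph{exclude} the dangerous configurations: for general $p$ there are no $4$–secant lines (these form a family of dimension $2$), no lines tangent to $\Sigma$ at two points, no line simultaneously tangent at one point and secant at another, and no inflectional (hyperosculating) tangent lines; in each case the relevant locus of special lines is shown to have dimension at most $2$, hence not to meet a general line through $p$, while the allowed special lines — simple tangents and ordinary trisecants — are checked to occur with the generic local behavior, namely transversal branches and contact of order exactly two.

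Granting these counts, I would finish by verifying the three normal forms locally. At the general point of the double curve the line $\overline{p\,q}$ is a simple secant, meeting $\Sigma$ in two points whose tangent planes project to distinct planes through $q$, so $\Sigma'$ is two smooth sheets crossing transversally, analytically $x^2-y^2=0$; the finitely many improper double points of $\Sigma$ project, for general $p$, to points of the same kind, their two independent tangent planes remaining independent after a general projection. At an ordinary trisecant the three transversal sheets are in general position, giving $xyz=0$. At a ramification point, where the tangent line has simple (order–two) contact, $g$ is a Whitney umbrella: expanding $f$ in suitable coordinates exhibits the parametrization $(u,v)\mapsto (u,v^2,uv)$, whose image is $x^2-zy^2=0$, a pinch point, the non–degeneracy of the second fundamental form in the tangent direction guaranteeing that no worse singularity appears. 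Finally, the genericity of $p$ keeps the three special strata pairwise disjoint, so that triple points, pinch points, and the smooth part of the double curve do not collide — precisely the configuration of Definition \ref{def:gensurf3}.

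The main obstacle I anticipate is exactly the middle step: establishing, for the \emph{general} surface of $\Pp^4$ and a \emph{general} center $p$, that the secant, tangent, and trisecant varieties have their expected dimensions and that all higher–order tangency and multisecant loci are genuinely excluded. Non–defectivity of the tangent variety and the absence of excess trisecants or bitangent lines are the delicate points, since a priori a special surface could carry, say, a positive–dimensional family of trisecants or a developable tangent variety; one must therefore either invoke the classification of the exceptional surfaces (in the circle of ideas behind Theorem \ref{thm:Sev}) or produce a direct genericity argument valid for the general member of the family. Controlling the simultaneous genericity of the projection with respect to the pre–existing improper double points adds a further bookkeeping layer, though not a conceptual one.
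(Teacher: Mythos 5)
First, a point of order: the paper does not prove this statement at all --- Theorem \ref{thm:genpro} is imported from Mezzetti--Portelli \cite[Thm.~8]{MP} (the result itself goes back to the classical school), so there is no internal proof to compare yours against. What follows measures your proposal against what such a proof actually has to contain.

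Your outline is the standard classical strategy --- stratify the singularities of the image into double locus, pinch points and triple points, exclude the worse configurations by dimension counts on families of lines in $\G(1,4)$, then verify the three local normal forms of Definition \ref{def:gensurf3} --- and the local analysis in your third paragraph (transversal sheets, Whitney umbrella) is fine. The genuine gap is the one you half-acknowledge yourself: every one of the dimension estimates that carries the argument (tangent lines forming a $3$-fold with finite fibres over a general point of $\Pp^4$, trisecants of dimension exactly $3$, quadrisecants, bitangent lines, tangent-and-secant lines and hyperosculating tangents all of dimension at most $2$) is asserted at its \emph{expected} value rather than proved. Those estimates are the entire content of the theorem; without them the proposal is a program, not a proof. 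Worse, the escape route you suggest --- ``a direct genericity argument valid for the general member of the family'' --- is not available: in this paper ``general surface of $\Pp^4$'' is a \emph{definition} (Definition \ref{def:gensurf4}: at most finitely many improper double points), not a genericity hypothesis, so the statement must hold for \emph{every} such surface, in particular for every smooth surface in $\Pp^4$; the only generic choice permitted is the centre of projection $p$. You therefore have to rule out, for an arbitrary fixed $\Sigma$ in this class, e.g.\ a $3$-dimensional family of bitangent or of inflectional tangent lines, and this is precisely where the cited source does real work --- and where the present paper, in the analogous $\Pp^3$ situation, has to deploy the focal machinery of \S\ref{S:fillingfam} (Proposition \ref{prop:foctang}) rather than an ``expected dimension'' claim. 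Two smaller inaccuracies: $\dim\mathrm{Tan}(\Sigma)=4$ holds for every non-degenerate surface in $\Pp^4$ by Zak-type tangency results, not by genericity of $\Sigma$; and a positive-dimensional family of trisecants is not in itself ``dangerous,'' since a $3$-dimensional such family is exactly what produces the finitely many ordinary triple points.
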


Though not essential for us, it is worth recalling that, taking the General Projection Theorem for granted, Franchetta proved in \cite{Fra} the following result (see also \cite[Thm.5]{MP}):
 
\begin{theorem}\label{thm:Fra} Let $\Sigma$ 
be a general surface of $\Pp^4$. Then the double curve of its general projection to $\Pp^ 3$  is irreducible, unless $\Sigma$ is the projection of the Veronese surface of conics
to $\Pp^4$. 
\end{theorem}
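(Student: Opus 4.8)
The plan is to present the double curve as the general fibre of an explicit irreducible family over the space $\Pp^4$ of centres, and to reduce the assertion to a statement about that family. Let $\nu\colon X\to \Sigma\subset \Pp^4$ be the normalization, which is smooth (Definition~\ref{def:gensurf4}), put $H=\nu^*\OO_{\Pp^4}(1)$, and for a general $O\in\Pp^4$ write $\varphi_O\colon X\to\Pp^3$ for the morphism induced by the projection from $O$. By the General Projection Theorem (Theorem~\ref{thm:genpro}) the image $\Sigma_O'$ has ordinary singularities, with double curve $\Gamma_O$, and by Remark~\ref{rem:aiuto} the curve $\Delta_O=\nu^{-1}(\Gamma_O)\subset X$ is a $2:1$ cover of $\Gamma_O$; a pair $\{p,q\}$ contributes to $\Gamma_O$ exactly when the secant $\langle \nu(p),\nu(q)\rangle$ passes through $O$. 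I therefore introduce
\[
W=\{(O,\{p,q\})\in \Pp^4\times{\rm Sym}^2(X)\ :\ \nu(p)\neq\nu(q),\ O\in\langle \nu(p),\nu(q)\rangle\},
\]
whose fibre over a general $O$ maps birationally onto $\Gamma_O$. Projecting $W$ to ${\rm Sym}^2(X)$ exhibits it as a $\Pp^1$-bundle (the universal secant line) over a dense open subset of the irreducible fourfold ${\rm Sym}^2(X)$, so $W$ is irreducible of dimension $5$ and $W\to\Pp^4$ is dominant with one-dimensional fibres.

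Since we are in characteristic $0$, the general fibre $\Gamma_O$ is irreducible if and only if the field $k(\Pp^4)$ is algebraically closed in $k(W)$; equivalently, writing $W\to Z\to\Pp^4$ for the factorization through the normalization $Z$ of $\Pp^4$ in $k(W)$, the number of irreducible components of $\Gamma_O$ equals the degree of the finite map $Z\to\Pp^4$, and the claim is that this degree is $1$. It is important to separate this from mere connectedness: because $2\dim X=4>3=\dim\Pp^3$, a Fulton--Hansen type connectedness theorem (or, equivalently, the $1$-connectedness of the double-point class of $\Delta_O$ on the smooth surface $X$) already shows that $\Gamma_O$ is connected, but connectedness does not suffice, as the projected Veronese demonstrates, its double curve being three concurrent lines --- connected yet reducible.

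To force $\deg(Z/\Pp^4)=1$ I would degenerate the centre of projection. The strategy is to produce, for a given non-Veronese $\Sigma$, a single (possibly special) centre $O_0$ for which the double curve is visibly irreducible: since the number of irreducible components of the fibres of $W\to\Pp^4$ is upper semicontinuous, one irreducible fibre forces the general fibre to be irreducible, hence $\deg(Z/\Pp^4)=1$. The existence of such a centre is dictated by the secant geometry of $\Sigma$, and this is precisely where the Veronese surface of conics is singled out: by Theorem~\ref{thm:Sev} it is the only surface whose secant behaviour is defective enough to obstruct the construction, and for it one checks directly on ${\rm Sym}^2(\Pp^2)$ that the locus of secants through the centre breaks into pieces, so that $\Gamma_O$ genuinely decomposes.

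The reduction of the first two paragraphs is formal; the substantive point, and the main obstacle, is the last step: proving \emph{irreducibility} rather than connectedness, uniformly over every non-Veronese general surface of $\Pp^4$, and showing that the sole obstruction is the defective secant variety classified in Theorem~\ref{thm:Sev}. Controlling the degeneration of the centre so that the limiting curve is the honest double curve --- and not contaminated by the apparent contour or by excess components arising when $O_0$ meets $\Sigma$ or its secant locus --- is the delicate part of the argument.
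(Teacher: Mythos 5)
First, a caveat about the comparison you were asked for: the paper does \emph{not} prove Theorem \ref{thm:Fra}. It is quoted from Franchetta \cite{Fra} (see also \cite{MP}), with the explicit remark that the argument is ``rather simple'' only when $\Sigma$ is the projection of a smooth surface in $\Pp^5$ (via \cite{Mum}) and ``quite delicate'' otherwise. So there is no in-paper proof to measure your attempt against; it has to stand on its own.

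Your set-up is sound: the incidence variety $W\subset\Pp^4\times{\rm Sym}^2(X)$ is irreducible of dimension $5$ (a $\Pp^1$-bundle over a dense open subset of the irreducible fourfold ${\rm Sym}^2(X)$), it dominates $\Pp^4$ with fibres mapping birationally onto the double curves, and the reduction to showing that $k(\Pp^4)$ is algebraically closed in $k(W)$ --- equivalently $\deg(Z/\Pp^4)=1$ in the Stein factorization --- is correct. You are also right that Fulton--Hansen connectedness does not suffice. But the step you propose to close the argument rests on a false premise: the number of irreducible components of the fibres of $W\to\Pp^4$ is \emph{not} upper semicontinuous. If $\deg(Z/\Pp^4)=n\geq 2$, then over a branch point $O_0$ of $Z\to\Pp^4$ the set-theoretic fibre of $Z$ has fewer than $n$ points, and the double curve over $O_0$ can perfectly well be irreducible, the $n$ components of the general $\Gamma_O$ having collapsed onto a single curve counted with multiplicity; the model case is $\{y^2=x\}\to\mathbb{A}^1$, whose general fibre has two components and whose special fibre has one. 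So exhibiting one special centre with irreducible double curve proves nothing about the general centre. To salvage a degeneration argument you would need a centre for which the flat limit of the double curves is irreducible \emph{and generically reduced} of the full degree --- and you construct no such centre: the sentence ``the existence of such a centre is dictated by the secant geometry of $\Sigma$'' is precisely where the mathematical content of the theorem is supposed to live, and it is left blank (Theorem \ref{thm:Sev}, which you invoke there, classifies defective surfaces in $\Pp^5$ and does not by itself produce the required $O_0$ or control the limit). The standard way to finish from your set-up is a monodromy argument --- show that the monodromy of $Z\to\Pp^4$ is trivial, e.g.\ by proving transitivity of the monodromy on the nodes of a general plane section of the projected surface --- which is essentially the route in \cite{Mum} for surfaces coming from $\Pp^5$; for general surfaces of $\Pp^4$ with improper double points Franchetta's argument is considerably more involved. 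As it stands, your proposal is a correct formal reduction followed by a gap, with an incorrect semicontinuity claim offered in place of the decisive step.
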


The proof is rather simple if $\Sigma$ is the projection of a smooth surface
in $\Pp^ 5$ (see  \cite{Mum}), otherwise the argument is quite delicate.

\section{Branch curves of projections}\label{S:NotPre}

In this section we recall the relations between the singularities of 
the branch curve of a general projection and the
tangent lines to the surface meeting the centre of projection.
Most of this is essentially 
contained in  \cite {kulkul}. We dwell on this here 
in order to make this note as self--contained as possible. 

\subsection{Branch curves}\label{SS:branch}
Let $\Sigma\subset \Pp^ 3$ be an irreducible
surface with ordinary singularities and let $p\in \Pp^ 3$ be a general point. Consider the projection
$\varphi: \Sigma\to \Pp^ 2$ from $p$ to a general plane in $\Pp^ 3$.  If, as above, 
$\nu : X \to \Sigma$ is the the normalization, 
one has the commutative diagram

\begin{equation}\label{eq:diagfico}
\begin{array}{cccl}
X & \stackrel{\nu}{\longrightarrow} & \Sigma & \subset\Pp^3 \\
  & \searrow^{\psi} & \downarrow^{\varphi} & \\
  &    & \Pp^2 & 
\end{array}
\end{equation}
Let  $B \subset \Pp^2$ be the branch curve of $\psi$. We will call it the \emph{branch curve} of 
$\varphi$ as well. We denote by  $R \subset X$ the 
\emph{ramification curve} of $\psi$ and by $Z$ its image on $\Sigma$. Note that $Z$ is the
residual intersection of $\Sigma$ with its polar with respect to $p$, off the double curve $\Gamma$ (see Remark \ref {rem:pinch} below).

Let $d$ be the degree of $\Sigma$ and $g$ be the geometric genus of its general plane section.
Then, by the Riemann--Hurwitz formula, one has $\deg(B)=2(d+g-1)$.

\begin{remark}\label {rem:branchproj} {\rm If $S\subset \Pp^ r$, $r\geq 5$, is a smooth, irreducible surface, and 
$\varphi_2: S\to \Pp^ 2$ is a general projection, we can consider branch and ramification curves of $\varphi_2$. 
In view of the results in \S \ref {S:NotPre1}, this is a particular case of the previous situation. Similarly for a general projection to $\Pp^ 2$ of a general surface in $\Pp^ 4$. 
}\end{remark}

\begin{remark}\label {rem:pinch} {\rm  The singular locus scheme of a surface $\Sigma\subset \Pp^ 3$ with equation
$$f(x_0,x_1,x_2,x_3)=0$$
 is the base locus scheme of the linear system of first polars of $\Sigma$. Recall that the polar of 
 $p=[p_0,p_1,p_2,p_3]$ with respect to $\Sigma$ has equation
 $$\sum_{i=0}^ 3 p_i\frac {\partial f}{\partial x_i}=0.$$
A local computation shows that, if $\Sigma$ has ordinary singularities,  the singular locus scheme consists of the double curve $\Gamma$ with
an embedded point of lenght two at each pinch point $z$. This translates the fact that,  for any pinch point $z$, all polars are tangent to the plane which is the support of the tangent cone to $\Sigma$ at $z$.
By applying Bertini's Theorem we see that $Z$ does not contain any triple point and is smooth at each pinch point. Accordingly, with notation as in Remark \ref{rem:aiuto}, $R$ does not contain any point of $T$ and is smooth at any point of $\Omega$.
}\end{remark}

\subsection {Tangent lines}\label {SS:tangent}
Let $T_\Sigma$ be the 3--dimensional, irreducible subvariety of the Grassmannian $\mathbb G(1,3)$ of lines in $\Pp^ 3$ which is the Zariski closure of the set of all lines tangent to $\Sigma$ at a smooth point. The lines in $T_\Sigma$ are called the \emph{tangent lines} to $\Sigma$.

Given a line $\ell$ in $\Pp^ 3$, the corresponding point $[\ell]\in \mathbb G(1,3)$ sits in $T_\Sigma$ if and only if there is a point $z\in \Sigma$ such that $z\in \ell$ and $\ell$ sits in the tangent cone to $\Sigma$ at $z$. In this case we say that $\ell$ is \emph{tangent} to $\Sigma$ at $z$ and that $z$ is a \emph{contact point} of $\ell$ with $\Sigma$.

If $\ell\not\subset \Sigma$, we  denote by $\ell_\Sigma$ the  $0$--dimensional scheme cut out by $\ell$ on $\Sigma$ and by $\ell_X$ its pull--back to $X$ via $\nu$. Then $[\ell]$ sits in $T_\Sigma$ if and only if  either  $\ell\subset \Sigma$ or $\ell_X$ is not reduced.  

Let $\ell$ be  a line in $\Pp^ 3$ through the centre of projection $p$. 
Write $\ell_X=n_1x_1+\ldots+n_hx_h$,
with $n_1+\ldots+n_h=d$ and $n_1\geq \ldots \geq n_h$. The line $\ell$ is tangent to $\Sigma$ if and only if there is an $i=1,\ldots,h$ such that $n_i\geq 2$, in which case $\ell$ is tangent to $\Sigma$ at $z_i=\nu(x_i)$, and $n_i-1$ is called the \emph{contact order} of $\ell$ with the branch of $\Sigma$ corresponding to $x_i$ at $z_i$.

By the genericity
assumption, $p$ does not sit on the developable tangent surface to the double curve $\Gamma$ of $\Sigma$. This implies that there are no pairs of indices $i,j$, with $1\leq i<j\leq h$ such that $n_i\geq 2, n_j\geq 2$ and $\nu(x_i)=\nu(x_j)$. 

We will say that
$\ell$ is \emph {$k$--tangent}  to $\Sigma$ if there are distinct indices $i_1,\ldots, i_k$ such that $n_{i_j}\geq 2$, for all $j=1,\ldots, k$. One  says that $\ell$ is an \emph {asymptotic tangent line}  to $\Sigma$ if
there is an index $i=1,\ldots,h$ such that $n_i\geq 3$, in which case $\ell$ is an asymtpotic tangent line to $\Sigma$ at $z=\nu(x_i)$.  The integer $a(\ell)= \sum_{i=1}^ h(n_i-2)$ is called the \emph{asymptotic weight} of $\ell$. If  $a(\ell)=1$ the line $\ell$  is called a \emph{simple} asymtpotic tangent. 

It is immediate that $y\in \Pp^ 2$ belongs to $B$ if and only if the line
$\ell=\ell_y=\langle p,y\rangle$ is tangent to $\Sigma$. If $\ell_{X}=n_1x_1+\ldots+n_hx_h$, the integer 
$b(y)=\sum_{i=1}^ h(n_i-1)$ is called the \emph{branching weight} of $y$ and $n_i-1$ is the 
\emph{ramification weight} $r(x_i)$ of $x_i$, $i=1,\ldots, h$. If these weight are 1 or 2, we will talk about 
\emph{simple} or \emph {double} branch (respectively, ramification) points.

Accordingly $x\in X$ sits on $R$,  and then  $z=\nu(x)$ sits
on $Z$, if and only if the scheme $\ell_{z,X}$ is non--reduced at $x$. 

\subsection{More on asymptotic tangent lines}\label {SS:asympt}

In the above setting, let $x\in X$ and set $z=\nu(x)$. Let   $\mathcal L$
 be the pull back on $X$ of the linear system of planes in $\Pp^ 3$. Note that there is a unique 
 curve $C_x$ in $\mathcal L$ singular at $x$, namely the pull--back to $X$ of the plane section of $\Sigma$ with the tangent plane to the branch through $z$ corresponding to $x$. 
We will say that $x$ is a \emph{planar point} of $X$
if $C_x$ has a point of multiplicity at least 3 at $x$.  If $x$ is a planar point, then all lines through 
$z$ in the tangent plane to the branch corresponding to $x$ are asymtpotic tangent lines.

\begin{lemma}\label{lem:isolated} There are only finitely many planes in $\Pp^ 3$ cutting  $\Sigma$ in a curve with a point of multiplicity $m\geq 3$.
\end{lemma}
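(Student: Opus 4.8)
The plan is to reduce the statement to a claim about \emph{planar points} and then to control these via the Gauss map. First I would observe that the multiplicity in question must be read branch by branch, i.e.\ on the normalization $X$: if $x\in X$ lies over a smooth point $z=\nu(x)$ of the corresponding branch and $H$ is a plane through $z$, then the divisor $\nu^*(H)$ is smooth at $x$ (multiplicity $1$) unless $H$ is the tangent plane to the branch at $z$; and even in that case, writing the branch locally as a graph $w=q_2(u,v)+q_3(u,v)+\cdots$ over $H=\{w=0\}$, the section $\nu^*(H)$ has local equation $q_2+q_3+\cdots=0$, of multiplicity $2$, unless the quadratic form $q_2$ vanishes identically, in which case the multiplicity is $\geq 3$. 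Hence a point of multiplicity $\geq 3$ forces $H$ to be the tangent plane $\mathbb{T}_x$ to the branch at $x$ \emph{and} forces $q_2\equiv 0$, i.e.\ the projective second fundamental form $\mathrm{II}_x$ to vanish; such an $x$ is precisely a planar point. In particular the plane $H$ is not free: it is determined by $x$, namely $H=\mathbb{T}_x$.

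So the real question is to show that planar points contribute only finitely many tangent planes (the finitely many points of $X$ lying over the triple points $T$ and pinch points $\Omega$ of $\Gamma$, where $X$ is singular or $\nu$ fails to be an immersion, can be set aside, being finite in number and hence producing finitely many planes anyway). I would consider the Gauss map $\gamma\colon X\setminus(T\cup\Omega)\to(\Pp^3)^*$, $x\mapsto \mathbb{T}_x$, a morphism on the smooth locus where $\nu$ is an immersion. Using the classical fact that the rank of $d\gamma_x$ equals the rank of $\mathrm{II}_x$ (see, e.g., \cite{GH2}), the planar locus is exactly $W=\{x : d\gamma_x=0\}$, a closed subset. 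Since $\Sigma$ is non--degenerate, $\gamma$ cannot be constant — were it constant, all embedded tangent planes would coincide with a fixed plane $H_0$, forcing $\Sigma\subset H_0$ — so $W\neq X$ is a proper closed subset, of dimension at most $1$, with finitely many irreducible components.

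It then remains to see that each component of $W$ yields a single plane. On a one--dimensional component $D$ the differential $d\gamma$ vanishes identically, so the restriction $\gamma|_D$ has zero derivative and is therefore constant (we work over $\C$); thus all planar points of $D$ share one and the same tangent plane, contributing exactly one plane. The zero--dimensional components are finite in number, each contributing its single tangent plane. Adding the finitely many planes possibly coming from the excluded set $T\cup\Omega$, we conclude that only finitely many planes cut $\Sigma$ in a curve with a point of multiplicity $\geq 3$.

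The main obstacle — and the place where non--degeneracy really enters — is the step ruling out a \emph{moving} family of planar tangent planes: a priori $W$ could be a curve, and one must show that along such a curve the tangent plane stays fixed. This is exactly what the identification of the planar condition with the vanishing of $d\gamma$ delivers. It also explains why reading the multiplicity on $X$ (branch by branch) is essential: along the double curve $\Gamma$ the tangent plane to one sheet already produces an ordinary triple point of the plane section of $\Sigma$ counted inside $H\cong\Pp^2$, so the finiteness assertion can only hold once the triple point is required to be concentrated on a single branch.
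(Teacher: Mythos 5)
Your proof is correct, but it follows a genuinely different route from the paper's. The paper argues by contradiction with a short deformation--theoretic argument: if the planes in question were infinite in number, one would get a one--parameter family $\{C_t\}$ of curves in the linear system $\mathcal L$ on $X$, each with a (possibly moving) point $x_t$ of multiplicity $m\geq 3$; the classical fact on first--order limits of singular points of divisors in a linear system then says that the tangent direction to this family at $t=0$ is a curve of $\mathcal L$ with a point of multiplicity at least $m-1\geq 2$ at $x_0$, and since the only curve of $\mathcal L$ singular at $x_0$ is $C_{x_0}$ itself, this is absurd. You instead identify the relevant planes as the tangent planes at planar points and control the planar locus via the Gauss map $\gamma$, using $\rk(d\gamma_x)=\rk(\mathrm{II}_x)$ plus non--degeneracy of $\Sigma$ to make $W=\{d\gamma=0\}$ a proper closed subset, and then the identical vanishing of $d\gamma$ along any one--dimensional component of $W$ to see that the tangent plane is constant there. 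Both arguments are sound; the paper's is shorter and self--contained modulo the quoted lemma, while yours is more structural: it actually describes the planar locus, shows each positive--dimensional component contributes a single plane, and makes visible where non--degeneracy enters. One small point to tighten: dismissing the excluded set $T\cup\Omega$ as ``finite, hence producing finitely many planes'' is not a valid inference as stated, since a single point could a priori admit a positive--dimensional family of planes meeting it with multiplicity $\geq 3$ (at a point of $\Omega$ the planes with multiplicity $\geq 2$ already form a pencil). The claim is nevertheless true and needs only a one--line check: at a pinch--point preimage, using the normal form $(u,v)\mapsto(uv,u,v^2)$, the pull--back $\alpha uv+\beta u+\gamma v^2=0$ of any plane through the image point has multiplicity at most $2$ at the origin; and at points of $T$ the branch is smooth and immersed, so your main analysis applies there without any exclusion.
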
 

\begin{proof} Suppose the assertion is not true. Then there is a 1--dimensional curve $\{C_t\}_{t\in D}$ in $\mathcal L$, 
parametrized by a disc, whose
general member has a point $\{x_t\}_{t\in D}$ of multiplicity $m\geq 3$. The tangent space to this curve  
at $t=0$ is contained in the set of curves in $\mathcal L$ having multiplicity 
at least $m-1\geq 2$ at $x_0$ (see e.g. \cite{CC2} or \cite{Ser}). 
Since there is only one such curve, namely $C_{x_0}$, we find a contradiction.
\end{proof}

By genericity, we may and will assume that the centre of projection stays off the finitely many  planes 
cutting  $\Sigma$ in a curve with a point of multiplicity $m\geq 3$.

Suppose $x$ is neither a planar point nor a point in $\Omega$. 
Then, there are only one or two asymptotic tangent lines 
through $z$ in the tangent plane to the smooth branch of $\Sigma$ corresponding to $x$:
their directions
are the zero locus of the \emph{second fundamental form} of $\Sigma$ at $x$ (see \cite {GH}). 
In geometric terms, consider the curve $C_x$, which has a double point at $x$. 
The directions of the asymptotic tangent lines at $z=\nu(x)$ are the images
via the differential $d\nu_x$
of the directions of the lines in the tangent cone to $C_x$ at $x$. 
One says that $x$, or  $z=\nu(x)$,
is a \emph{parabolic point} if the tangent cone to $C_x$ at $x$ is non--reduced.
Then, the corresponding asymptotic tangent line will be called \emph{parabolic}.

Let $P(S)$ be the Zariski closure of the set of parabolic points of $S$. 
One has  $\dim(P(S))\leq 1$. Otherwise $\Sigma$ would be a \emph{developable surfaces},
i.e. either a 
cone or the locus of tangent lines to a curve (see \cite {GH}). This is not possible, since 
$\Sigma$ has ordinary singularities. 

By the genericity of the centre of projection $p$, we conclude that:

\begin{lemma}\label{lem:one} In the above setting, no
parabolic asymptotic tangent line contains the centre of projection. 
\end{lemma}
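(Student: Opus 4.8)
The plan is to prove the statement by a dimension count on the subvariety of $\Pp^3$ swept out by the parabolic asymptotic tangent lines. The key input, already established above, is that the Zariski closure $P$ of the set of parabolic points has $\dim(P)\le 1$, the alternative $\dim(P)=2$ being excluded because it would force $\Sigma$ to be developable, against the hypothesis of ordinary singularities.

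First I would make precise the relevant family of lines. Remove from $P$ the finitely many planar points (there are finitely many, by Lemma \ref{lem:isolated} and the ensuing genericity assumption) together with the finitely many points of $\Omega$; over the resulting dense open subset $P^\circ$ of each component of $P$, a parabolic point $z=\nu(x)$ is neither planar nor in $\Omega$, so the second fundamental form of $\Sigma$ at $x$ has a single double root and there is a well-defined, unique parabolic asymptotic tangent line $\ell_z$ through $z$, lying in the tangent plane to the branch corresponding to $x$. The assignment $z\mapsto [\ell_z]$ is algebraic, hence defines a morphism $P^\circ\to \G(1,3)$, i.e.\ an algebraic family of lines over $P^\circ$.

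Next I would bound the dimension of the union of these lines. Consider the incidence variety $\mathcal{I}=\{(z,y)\,:\,z\in P^\circ,\ y\in \ell_z\}$ with its two projections, in particular $q:\mathcal{I}\to \Pp^3$, $(z,y)\mapsto y$. As $\mathcal{I}$ is fibered over $P^\circ$ with one-dimensional fibers (the lines $\ell_z$), one has $\dim(\mathcal{I})\le \dim(P^\circ)+1\le 2$, whence the image $W:=q(\mathcal{I})$ satisfies $\dim(W)\le 2$. The finitely many discarded planar points and points of $\Omega$ contribute at most finitely many further lines; indeed, by the genericity assumption the centre of projection already avoids the finitely many planes cutting $\Sigma$ in a curve with a point of multiplicity $m\ge 3$, so these do not affect the bound $\dim(W)\le 2$.

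Finally, since a parabolic asymptotic tangent line passes through $p$ if and only if $p\in W$, and $W$ is a proper closed subset of $\Pp^3$ of dimension at most $2$, the genericity of the centre of projection $p$ guarantees $p\notin W$, which is the assertion. The only delicate point is the first step: one must check that the parabolic lines genuinely fit into an algebraic family over $P^\circ$, so that $W$ is constructible of the expected dimension. Once this is granted, the conclusion is a routine genericity argument.
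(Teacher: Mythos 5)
Your proposal is correct and is essentially the paper's own argument: the paper derives the lemma directly from the bound $\dim(P(S))\leq 1$ (parabolic locus not $2$-dimensional since $\Sigma$ is not developable) together with the genericity of $p$, which is exactly your dimension count. You merely spell out the incidence-variety bookkeeping that the paper leaves implicit.
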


\subsection{Multiple points of the branch curve}\label {SS:multiple}

The following proposition tells us what are the multiple points of the branch curve.

\begin{proposition}\label {prop:multiplebranch} In the above setting, the multiplicity of $B$ at $y$ is the branching weight $b(y)$.
\end{proposition}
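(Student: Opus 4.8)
The plan is to compute $\mathrm{mult}_y(B)$ branch by branch. Since $B$ is the image of the ramification curve $R$ under the finite map $\psi$, and the points of $\psi^{-1}(y)$ lying on $R$ are precisely the $x_i$ with $n_i\geq 2$, near $y$ the curve $B$ is the union of the local images $\psi(R\cap U_i)$, where $U_i$ is a small neighbourhood of such an $x_i$. As the multiplicity of a plane curve at a point is the sum of the multiplicities of its local branches, it is enough to prove that each image branch $\psi(R\cap U_i)$ has multiplicity $n_i-1$ at $y$; summing over the $x_i$ with $n_i\geq 2$ (the others contributing $n_i-1=0$) then gives
\[
\mathrm{mult}_y(B)=\sum_{i=1}^{h}(n_i-1)=b(y).
\]
By Remark \ref{rem:pinch} together with Lemmas \ref{lem:isolated} and \ref{lem:one}, each such $x_i$ is a smooth point of $X$ lying on a smooth branch of $\Sigma$ at $z_i=\nu(x_i)$, and is neither planar nor parabolic; this is what will make the local model below well behaved.

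Fix one such $x_i$. I would choose affine coordinates $(u,v,w)$ on $\Pp^3$ centred at $z_i$ so that $\{w=0\}$ is the tangent plane to the branch, the tangent line $\ell_y$ is the $u$--axis, and the projection from $p$ is the linear map $(u,v,w)\mapsto(v,w)$ whose fibres are the lines parallel to the $u$--axis. Then $(u,v)$ are local coordinates on $X$ at $x_i$, the branch is a graph $w=g(u,v)$ with $g(0,0)=0$ and $dg(0,0)=0$, and $\psi$ is locally $(u,v)\mapsto(v,g(u,v))$, sending $x_i$ to $y$. The contact order condition $n_i\geq 2$ becomes $g(u,0)=c\,u^{n_i}+O(u^{n_i+1})$ with $c\neq 0$, and the Jacobian criterion identifies $R$ locally with $\{g_u=0\}$. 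The non--parabolic hypothesis gives $g_{uv}(0,0)\neq 0$ when $n_i\geq 3$ (the two asymptotic directions at $z_i$ are distinct), while for $n_i=2$ the tangent is non--asymptotic, so $g_{uu}(0,0)=2c\neq 0$; in either case $R=\{g_u=0\}$ is smooth at $x_i$. For $n_i\geq 3$ I would solve $g_u=0$ as $v=\eta(u)=\beta\,u^{n_i-1}+\cdots$ with $\beta\neq 0$, parametrise $R$ by $u$, and push forward by $\psi$ to obtain the image branch
\[
u\longmapsto\bigl(v,g(u,v)\bigr)=\bigl(\eta(u),\,g(u,\eta(u))\bigr)=\bigl(\beta\,u^{n_i-1}+\cdots,\ c(1-n_i)\,u^{n_i}+\cdots\bigr),
\]
whose two coordinate functions have orders $n_i-1$ and $n_i$; since these are coprime the parametrisation is primitive, so the image is a genuine branch of multiplicity $\min(n_i-1,n_i)=n_i-1$. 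The case $n_i=2$ is simpler: parametrising $R$ by $v$ gives the smooth image branch $v\mapsto(v,O(v^2))$ of multiplicity $1=n_i-1$.

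Combining the two displays yields $\mathrm{mult}_y(B)=b(y)$. The main obstacle is the local analysis of the middle paragraph: one must make sure the genericity assumptions of \S\ref{S:NotPre} genuinely force $R$ to be smooth at every contact point $x_i$ --- this is exactly where Lemma \ref{lem:one} enters, excluding parabolic tangents and hence guaranteeing $g_{uv}(0,0)\neq 0$ for $n_i\geq 3$ --- and that the leading coefficient $c(1-n_i)$ of $g(u,\eta(u))$ is nonzero, so that the second coordinate really has order $n_i$ and the image branch has multiplicity exactly $n_i-1$ rather than something larger. Once the normal form $w=g(u,v)$ and the identification $R=\{g_u=0\}$ are in place, the order computations are routine.
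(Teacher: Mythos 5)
Your route is genuinely different from the paper's. The paper never analyses $B$ branch by branch: it takes a general plane $\pi$ through $\ell_y$, pulls back to a smooth curve $C$ of genus $g$ on $X$, and applies Riemann--Hurwitz to the induced degree-$d$ map $C\to\Pp^1$; Bertini makes all branch points other than $y$ simple, so their number is $2(d+g-1)-b(y)=\deg(B)-b(y)$, and since these are exactly the points of $(\pi\cap\Pp^2)\cap B$ off $y$, the multiplicity of $B$ at $y$ drops out of the count. Your argument instead computes the multiplicity of each local image branch $\psi(R\cap U_i)$ directly. Where it applies it is correct --- the orders check out ($\eta(u)=-\tfrac{n_i c}{g_{uv}(0,0)}\,u^{n_i-1}+\cdots$ and $g(u,\eta(u))=c(1-n_i)u^{n_i}+\cdots$, and the non-planar/non-parabolic hypotheses are correctly traced back to Lemma \ref{lem:isolated} and Lemma \ref{lem:one}) --- and it yields strictly more than the statement asks for, namely the multiplicity contributed by each individual contact point.

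There is, however, a gap. Your blanket claim that ``each such $x_i$ is a smooth point of $X$ lying on a smooth branch of $\Sigma$ at $z_i$'' is not what Remark \ref{rem:pinch} says, and it fails when $z_i$ is a pinch point. The ramification curve $R$ does pass through the points of $\Omega$ (the Remark asserts that $R$ is \emph{smooth} there, not that it avoids them): for each pinch point $q$, the line $\langle p,q\rangle$ meets $\Sigma$ with multiplicity $2$ at $q$, and its pull-back to $X$ has length $2$ at the unique preimage, so these points occur among your contact points with $n_i=2$. At such a point $\nu$ is not an immersion and the branch of $\Sigma$ is not a graph $w=g(u,v)$ over its tangent plane, so the local model on which your entire middle paragraph rests does not exist. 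The conclusion still holds at the finitely many corresponding $y$ --- in the normal form $x^2=zy^2$, with $\nu$ given locally by $(s,t)\mapsto(st,s,t^2)$, a short computation shows the image of $R$ is smooth, so $\mathrm{mult}_y(B)=1=b(y)$ --- but this case must be added explicitly; the paper's global argument sidesteps it by never looking at local branches at all.
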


\begin{proof} Let $\ell=\ell_y$ and $\ell_X=n_1x_1+\ldots+n_hx_h$.
Let $\pi$ be a general plane through $\ell$ and let $C$ be the pull back on $X$ of the curve
section of $\Sigma$ with $\pi$. Then $C$ is smooth and irreducible of genus $g$. The projection from $p$ induces a morphism $C\to \Pp^ 1$ of degree $d$ which has a branch point $y$, corresponding to the line $\ell$ with ramification index $\sum_{i=1}^ h(n_i-1)$. Bertini's theorem guarantees that the remaining branch points, corresponding to the intersections of $\pi$ with $B$ off $y$, are all simple. Hence their number is $2(d+g-1)-\sum_{i=1}^ h(n_i-1)=\deg(B)-b(y)$. This proves the assertion.
\end{proof}

More specifically:

\begin{proposition}\label {prop:doublebranch} In the above setting, if the branch curve has only double points, 
then $R$ is smooth and irreducible, $B$ is also irreducible and has only nodes and cusps, respectively corresponding 
to bitangent, not tritangent, and simple asymptotic tangent lines containing the centre of projection $p$.
\end{proposition}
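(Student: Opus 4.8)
The plan is to combine the local analysis of the branch curve's singularities with the global consequences of the hypothesis that $B$ has only double points. I would organize the proof around the dictionary established in \S\ref{SS:tangent}: a point $y \in \Pp^2$ lies on $B$ iff $\ell_y = \langle p, y\rangle$ is tangent to $\Sigma$, and by Proposition \ref{prop:multiplebranch} the multiplicity of $B$ at $y$ equals the branching weight $b(y) = \sum_i (n_i - 1)$. The hypothesis $\mathrm{mult}_y(B) \le 2$ for all $y$ therefore translates into the purely combinatorial constraint that the partition $\ell_X = n_1 x_1 + \cdots + n_h x_h$ satisfies $\sum_i (n_i-1) \le 2$ at every point. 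This leaves only three possibilities: a single ramification point with $n_i = 2$ (simple branch point, smooth point of $B$); two distinct ramification points each with $n_i = 2$, i.e.\ a bitangent, not tritangent line ($b(y) = 2$, expected node); or a single point with $n_i = 3$, i.e.\ a simple asymptotic tangent line ($b(y)=2$, expected cusp).

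First I would dispose of the smoothness and irreducibility of $R$. Since the ramification weight $r(x) = n_i - 1$ is everywhere at most $2$, and since by Remark \ref{rem:pinch} the curve $R$ avoids the set $T$ and is smooth at the points of $\Omega$, the structure of $R$ is governed by the second fundamental form: away from planar and parabolic directions (excluded by Lemmas \ref{lem:isolated}, \ref{lem:one} and the genericity of $p$), the double ramification points correspond exactly to the simple asymptotic tangent lines, which are isolated. I would argue that $R$ is smooth by checking that at each $x \in R$ the defining condition (non-reducedness of $\ell_{z,X}$ at $x$) cuts out a smooth curve on $X$, using that $x$ is neither planar nor parabolic nor a ramification point of $\nu|_\Delta$. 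Irreducibility of $R$ would then follow once $B$ is known irreducible, via the fact that $\psi|_R \colon R \to B$ is birational onto its image (each general branch point is simple, so has a unique preimage on $R$).

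Next, the heart of the matter is the local analytic model of $B$ at the two types of double points. For a bitangent, not tritangent line, the two branches of $\Sigma$ tangent to $\ell_y$ produce two local sheets of $B$ meeting at $y$; I would show these two branches are smooth and meet transversally, yielding a node. Concretely, $\psi$ near each of the two ramification points looks analytically like the folding map $(u,v)\mapsto(u, v^2)$, and since the two contact points and their tangent directions are in general position (again by the genericity of $p$, which keeps $p$ off the bitangent developable of $\Sigma$ along $\Gamma$), the two resulting smooth arcs of $B$ are not tangent to each other — hence a node. For a simple asymptotic tangent line, $n_i = 3$ at a single point, and the local model of $\psi$ becomes the Whitney pleat, $(u,v)\mapsto(u, v^3 - uv)$ or its normal form $(u,v)\mapsto(uv - v^3, -v^2)$, whose branch locus is the standard cusp $27 y_1^2 = 4 y_2^3$. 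I would invoke the non-degeneracy of the second fundamental form (the point is parabolic-free by Lemma \ref{lem:one}) to guarantee the cubic term is genuinely present and the cusp is ordinary.

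The main obstacle I anticipate is the global irreducibility of $B$, since the local analysis only controls singularity types. For this I would pass to the normalization: because $B$ has only nodes and cusps and $R$ is smooth and irreducible, the normalization of $B$ is isomorphic to $R$ (the map $R \to B$ is the normalization morphism, being birational with $R$ smooth), so $B$ is irreducible precisely because $R$ is. This makes irreducibility of $R$ the genuine crux, and I would establish it by showing $R$ is a smooth curve in the linear-system-theoretic incidence correspondence that is connected — for instance by exhibiting $R$ as (an open part of) the preimage under $\nu$ of the residual intersection $Z$ of $\Sigma$ with its polar, which Remark \ref{rem:pinch} already describes, and arguing connectedness of this complete-intersection-type locus. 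The subtle point throughout is keeping track of how the finitely many special phenomena (triple points of $\Gamma$, pinch points, planar and parabolic points) are all avoided by the general centre $p$, so that the clean local models above apply at \emph{every} point of $B$; assembling these genericity statements carefully is where most of the care is required.
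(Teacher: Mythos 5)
Your reduction to the two cases (bitangent-not-tritangent versus simple asymptotic tangent) and your local models for the node and the cusp follow the same outline as the paper, which treats the cusp by an explicit coordinate computation showing the tangent plane at the contact point meets $B$ with multiplicity $3$. But there are two places where your argument does not go through as written. First, in the node case the genericity condition you invoke is the wrong one: keeping $p$ off the tangent developable of the double curve $\Gamma$ controls the behaviour along $\Gamma$, not the transversality of the two sheets of $B$ at a bitangent line whose contact points are smooth points of $\Sigma$. What is actually needed is that the two tangent planes $T_{\Sigma,x_1}$ and $T_{\Sigma,x_2}$ are distinct, and the paper gets this by observing that the locus $W\subset\Sigma\times\Sigma$ of pairs of distinct smooth points with a common tangent plane maps to the singular locus of the dual surface $\Sigma^*$, which is a surface since $\Sigma$ is not developable; hence $\dim(W)\le 1$ and a general $p$ lies on no line joining such a pair. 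Your "general position of contact points and tangent directions" needs this (or an equivalent) dimension count to be a proof.

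The more serious gap is the connectedness of $R$, which you correctly identify as the crux but then propose to settle by "arguing connectedness of this complete-intersection-type locus", namely the residual intersection $Z$ of $\Sigma$ with its polar off $\Gamma$. That residual curve is not a complete intersection — the full intersection of $\Sigma$ with the polar contains $\Gamma$ with multiplicity, and connectedness of a complete intersection says nothing about connectedness of a residual component — so this step has no proof behind it. The paper's route is different and decisive: by Riemann--Hurwitz $R\sim K_X+3H$ on the smooth normalization $X$, adjunction theory gives that $K_X+2H$ is nef, hence $R$ is ample (as $H$ is ample) and therefore connected; combined with smoothness this gives irreducibility of $R$, and irreducibility of $B=\psi(R)$ follows. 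You would need to replace your residual-intersection argument with something of this kind (or another positivity argument on $X$) for the proof to close.
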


\begin{proof} Let $y$ be a singular point of $B$, which by assumption has multiplicity 2. Suppose it corresponds to the line $\ell$ through $p$, with $\ell_X=n_1x_1+\ldots+n_hx_h$. By Proposition  \ref 
{prop:multiplebranch} we have only two possibilities:
\begin{itemize}
\item [(a)] $n_1=n_2=2$, $n_3=\ldots=n_{d-4}=1$, i.e. $\ell$ is a bitangent, not tritangent, line;
\item [(b)] $n_1=3$, $n_2=\ldots=n_{d-3}=1$, i.e. $\ell$ is a simple asymptotic tangent line.
\end{itemize}

In case (a), the same argument as in the proof of Proposition \ref 
{prop:multiplebranch} shows that $R$ is smooth at the two points $x_1,x_2$ over $y$. 
The lines in the  tangent cone to $B$ at $y$ consist of the images, via the projection
from $p$, of the two tangent planes $T_{S,x_i}$ to $S$ at the branch corresponding to 
$x_i$, $i=1,2$.  We claim that these two planes are distinct, then also their projections
from $p$ are distinct, thus $y$ is a node for $B$. 

To prove this, consider the closure
$W$ in $\Sigma\times \Sigma$
of the pairs $(x_1,x_2)$ of distinct, smooth points, such that  $T_{S,x_1}= T_{S,x_2}$. 
One has $\dim(W)\leq 1$. In fact  the dual $\Sigma^ *$ of $\Sigma$ is a surface because
$\Sigma$ is not a developable surface (see \cite {GH}), and the points in $W$ correspond to singular points of $\Sigma^ *$. Then, by the genericity of $p$, there is no pair of points
 $(x_1,x_2)$ in $W$ such that $p, x_1,x_2$ are collinear, which proves our claim. 
 
 In case (b), we will show, with a direct computation, that $R$ is smooth at $x:=x_1$,
 it is tangent there to $\ell$, and the image of the tangent plane $T_{S,x}$ via the projection
 from $p$ has intersection multiplicity 3 with $B$ at $y$. This will prove that $y$ is a cusp.
 
Choosing affine coordinates, we may assume that $p$ is the point at infinity of the $z$--axis,
that $\nu(x)$ is the origin, that the tangent plane to $\Sigma$ at $z$ is the plane $y=0$,
that the asymptotic tangent lines to $\Sigma$ at the origin are the $z$ and the $y$--axes;
note that these asymptotic lines are distinct by Lemma \ref {lem:one}. 

In this coordinate system $\Sigma$ has equation of the form

\begin{equation}\label{eq:effe}
f_0(x,z)+y+\sum_{i=2}^ dy^ if_i(x,z)=0
\end{equation}
where $f_i(x,z)$, $i \geq 2$, is a polynomial of degree at most $d-i$ and 

\begin{equation}\label {eq:ici}
f_0(x,z)=axz+a_1x^ 2+a_2x^ 2z+a_3xz^ 2+a_4z^ 3+o(3)\end{equation}
with $a\cdot  a_4\neq 0$. 

In this setting, the ideal of $R$  around the origin is generated by the first
member of  \eqref {eq:effe}
and by its derivative with respect to $z$, i.e.

\begin{equation}\label{eq:effez}
{\frac {\partial f_0} {\partial z}} (x,z)+\sum_{i=2}^ dy^ i  \frac  {\partial f_i}  {\partial z}  (x,z).
\end{equation}
To prove that $R$ is smooth at the origin, one has to prove that the plane $y=0$ is not
tangent at the origin  to the surface defined as the zero locus of \eqref {eq:effez}, i.e.
that the curve with equation

$${\frac {\partial f_0} {\partial z}} (x,z)=0$$
is not singular at the origin. This is immediate, since the curve in question
has equation

$$ax+a_2x^ 2+2a_3xz+3a_4z^ 2+o(2)=0$$
with $a\neq 0$. Note that we can write a local analytic equation of
this curve as

\begin{equation}\label {eq:cic}
x=-\frac {3a_4}a z^ 2+o(2).\end{equation}

Next, let us compute the intersection multiplicity of
the tangent plane to $\Sigma$ at the origin, with $R$. This amounts to compute 
the intersection multiplicity at the origin of the two curves

$$f_0(x,z)=0, \quad {\frac {\partial f_0} {\partial z}}  (x,z)=0.$$
This is the order at 0 of the power series obtained by substituting \eqref {eq:cic} into
\eqref  {eq:ici}, which is clearly 3.  This implies that the line $z=0$ has multiplicity of
intersection 3 with $B$ at the origin, proving that $B$ has a cusp there. 

Finally we have to prove the assertion about the irreduciblity of $R$. 
Since, as we saw,  $R$ is smooth, it suffices to prove that it is connected.
Let $H$ be a curve
in $\mathcal L$.  By the Riemann-Hurwitz theorem we have 
$R\sim K_X +3 H$, where $K_X$, as usual, denotes a canonical divisor of $X$ and $\sim$ denotes linear equivalence. 
By adjunction theory (cf.\ e.g.
\cite{Ionescu} and \S\ 7 in \cite{CR}) one has that $K_X + 2H$ is nef and
since $H$ is ample, then $R$ is ample, hence it is connected, finishing the proof.
\end{proof}

\section{Focal loci}\label{S:fuochi}

In this section, we briefly recall some basic definitions and results concerning the so called 
{\em focal loci} of families of projective varieties. These will be  essential in the next section. 
We follow  \cite{CC} and \cite{CS}, inspired in turn by  \cite{Seg}.

\subsection {The focal machinery}\label {SS:focal}
Let $Y$ be a smooth, irreducible, projective variety and let 
\begin{equation}\label{eq:focald}
\begin{array}{ccl}
\X
 & \subset & D \times Y \\
\downarrow & & \\
D & & 
\end{array}
\end{equation}be a flat family of closed subschemes of $Y$ parametrized by the base scheme $D$, which we assume to be 
integral. 
Denote by 
$$q_1 : \X
 \to D \;\;{\rm and}\;\; q_2: D \times Y \to Y$$the natural projections. Set $f:= q_2|_{\X
}$. For every point $z\in D$ we denote by $X_z$ the fibre of $q_1$ over $z$.

For any scheme $Z$, set 
$$\T_Z := {\mathcal Hom}(\Omega^1_{Z}, \Oc_Z)$$ and let 
$$\N := \N_{\X
/B\times Y}\;\; {\rm and} \;\; 
\T(q_2) : = {\mathcal Hom}(\Omega^1_{B \times Y /Y}, \Oc_{B \times Y}).$$One has the following 
commutative diagram of sheaves on $\X
$
\begin{equation}\label{eq:focaldiag}
\begin{array}{ccccccc}
  &  & & \T(q_2)|_{\X
} & \stackrel{\chi}{\to} & \N & \\
 &  & & \downarrow & & || & \\
0 \to & \T_{\X
} & \to & \T_{D \times Y}|_{\X
} & \to & \N & \to 0 \\
 &  \downarrow^{df} & & \downarrow & &  & \\
  & q_2^*(\T_Y|_{\X
}) & =  & q_2^*(\T_Y|_{\X
}) & & & 
\end{array}
\end{equation}

\noindent
called the {\em focal diagram} of the family \eqref{eq:focald} (cf. \cite[Diagram (3)]{CS}).

The map $\chi$ is defined by the commutative diagram 
and is called the {\em global characteristic map} of the family \eqref{eq:focald}. From the focal diagram \eqref{eq:focaldiag}, one sees that 
${\rm ker} (\chi) = {\rm ker} (df)$.
We denote by $\Ff$ this sheaf.
We will mainly consider the case in which $f: \X
\to Y$ is dominant and generically finite, so that
$\Ff$ is a torsion sheaf which we call the \emph{focal sheaf} of the family \eqref {eq:focald}.
Its support
$\Ff(\X
)$ is called the \emph{focal scheme} of the family, 
and 
$\dim (\Ff(\X
))<\dim (\X
)$. 
If $z\in D$ is a point, we denote by $\Ff(X_z)$ the intersection of the focal scheme with 
$X_z$.

 From the focal diagram \eqref{eq:focaldiag}, one can think of 
$\Ff(\X
)$ as the set of ramification points of the map $f$. We denote by $L(\X)$ its image
via $f$, i.e. the set of branch points of  $f$. 

\subsection {Filling families of linear spaces}\label {SS:filling}
The situation to have in mind for our applications is the following: $Y = \Pp^r$ and $\X
$ is a family of $k$--dimensional linear subspaces of $\Pp^r$ such that 
$\dim(D) = r-k$ and $f:\X
\to \Pp^ r$ is dominant. This is called a \emph{filling family} 
of linear subspaces of $\Pp^ r$.
For example,  $D$ could be a $(r-k)$--dimensional subscheme of the Grassmannian 
$\mathbb{G}(k,r)$, with the filling property. 

Since we will work at the general point of $D$, and since $D$ is integral, we may and will assume that $D$ is smooth.  

\begin{proposition}\label{prop:tuttofuoco} If  \eqref {eq:focald} is a filling family of 
 $k$--dimensional linear subspaces of $\Pp^r$ and if $z$ is a general point of $D$, then
 $\Ff(X_z)$ is a 
hypersurface of degree $r-k$ in $X_z\simeq \Pp^ k$. 
\end{proposition}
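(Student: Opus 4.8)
The plan is to reduce the computation of $\Ff(X_z)$ for the general member to a purely linear-algebraic statement about the characteristic map $\chi$, and then to count the degree by a dimension/eigenvalue argument on the fibre $X_z \simeq \Pp^ k$. First I would fix a general $z\in D$ and examine the focal diagram \eqref{eq:focaldiag} restricted to $X_z$. Since $X_z$ is a linear $\Pp^ k\subset \Pp^ r$, its normal bundle is $\N_{X_z/\Pp^ r}\simeq \Oc_{\Pp^ k}(1)^{\oplus(r-k)}$. The characteristic map $\chi$, restricted to the fibre, becomes a bundle map $T_z D\otimes \Oc_{X_z}\to \N_{X_z/\Pp^ r}$, i.e. an $(r-k)\times(r-k)$ matrix of linear forms on $X_z$, because $\dim(D)=r-k=\operatorname{rank}(\N_{X_z/\Pp^ r})$. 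The focal scheme on the fibre is exactly the locus where this square matrix of linear forms drops rank, that is, the vanishing of its determinant.

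The key identification is that $\Ff(X_z)=\ker(\chi)\cap X_z$ is cut out by $\det(\chi|_{X_z})=0$. Granting the filling hypothesis, $f$ is dominant and generically finite, so at the general point $\chi$ is generically an isomorphism on the fibre; hence $\det(\chi|_{X_z})$ is a nonzero homogeneous polynomial. Because each entry is a linear form on $\Pp^ k$ and the matrix is $(r-k)\times(r-k)$, this determinant is a form of degree $r-k$, so its zero locus is a hypersurface of degree $r-k$ in $X_z\simeq \Pp^ k$, as claimed. The two things I must verify carefully are: (i) that the source of $\chi|_{X_z}$ really is the trivial bundle $T_zD\otimes\Oc_{X_z}$, which follows from $D$ being smooth at the general $z$ and from the family being parametrized linearly, so that the derivative of the family in the $D$-directions produces sections of $\N$ that are linear in the fibre coordinates; and (ii) that the entries are genuinely linear forms, which comes from the standard description of $\N_{X_z/\Pp^ r}$ and the fact that infinitesimal motions of a linear space inside $\Pp^ r$ act on $\Oc_{\Pp^ k}(1)$.

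The main obstacle I anticipate is making precise why the characteristic map is represented by a matrix of \emph{linear} forms rather than something of mixed or higher degree, and why it is exactly square of size $r-k$. This is where the filling property $\dim(D)=r-k$ is essential: it guarantees the matrix is square, so that the rank-drop locus is cut by a single determinant rather than by a nontrivial determinantal ideal of higher codimension, and it guarantees via dominance that the determinant does not vanish identically. I would handle this by choosing local coordinates on $D$ near $z$ and an explicit trivialization of the tautological family over a chart, writing the $k$-plane $X_w$ for $w$ near $z$ as a linear family varying with $w$, and then reading off $\chi$ as the first-order variation; the linearity in the fibre coordinates will be manifest from this explicit parametrization. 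Once the matrix is exhibited, the degree count and the nonvanishing are immediate, completing the proof.
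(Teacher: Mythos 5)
Your proposal is correct and follows essentially the same route as the paper: restrict the characteristic map $\chi$ to the fibre $X_z$, identify it with a square $(r-k)\times(r-k)$ matrix of linear forms via $T_{D,z}\otimes\Oc_{X_z}\to \N_{X_z/\Pp^r}\cong\Oc_{X_z}(1)^{\oplus(r-k)}$, and observe that the determinant is a nonzero form of degree $r-k$ cutting out $\Ff(X_z)$. The paper's justification for nonvanishing is the inequality $\dim(\Ff(\X))<\dim(\X)$, which forces $\dim(\Ff(X_z))<k$ on a general fibre; this is the same point you make via dominance and generic finiteness of $f$.
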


\begin{proof} If one restricts the global characteristic map $\chi$ to the fibre $X_z$ 
this reduces to 
\begin{equation}\label{eq:caratt}
 \Oc_{X
_z}^{\oplus(r-k)} \cong T_{D,z} \otimes \Oc_{X
_z}  \stackrel{\chi_z}{\longrightarrow} \N_{X
_z/\Pp^r} 
\cong \Oc_{X
_z}(1)^{\oplus(r-k)}.
\end{equation}In particular, for any $z \in D$, the map $\chi_z$ can be viewed 
as a square matrix $A_z$ of size $r-k$, with linear entries. Thus, $\Ff(X
_z)$
is defined by the equation $\det(A_z) = 0$ and the assertion follows. Note that $\det(A_z)$
cannot be identically zero, because $\dim(\Ff(\X))<\dim(\X)$, and therefore also
$\dim (\Ff(X_z))<\dim (X_z)=k$. 
\end{proof}

\subsection {Filling families of lines in $\Pp^ 3$}\label {SS:lines}
More specifically, we will consider filling families of lines in $\Pp^ 3$. In this case 
on the general line of the family there are two foci, which can either
 be distinct or only one with multiplicity 2. It is useful to recall how the equation of 
 foci can be computed on the general line  $\ell$ of the family $\X$. 
 
Since the problem is local, 
 we may assume the family to be parametrized by 
 a bidisc $D$. 
 More precisely, if $z=(u,v)$ is a point of $D$, 
 we may assume that the line $\ell_z$ is described as the intersection of the
  two planes with equation
 \begin{equation}\label {eq:retta}
 a(z)\times x=b(z)\times x=0,\end{equation}
 with 
 $$a(z)=(a_0(z),a_1(z),a_2(z),a_3(z)), \quad b(z)=(b_0(z),b_1(z),b_2(z),b_3(z)), \quad x=(x_0,x_1,x_2,x_3).$$ We may 
write $a, b, a_i, b_i$ rather that $a(z), b(z), a_i(z), b_i(z)$. We will denote with lower case indices the derivatives with
respect to the variables $u,v$, i.e. $a_u=\frac {\partial a}{\partial u}$,   $a_{i,u}=\frac {\partial a_i}{\partial u}$. etc.

In this setting, the characteristic map can be described by looking at \eqref {eq:caratt}. One sees that the equation of the focal locus on $\ell_z$ is
 
 \begin{equation}\label{eq:focaleq2}
 \det \left(\begin{matrix} a_u\times x&  a_v\times x\cr
 b_u\times x&  b_v\times x\cr
 \end{matrix} \right)=0
\end{equation}
modulo \eqref {eq:retta}.

\section{Filling families of tangent lines to a surface}\label{S:fillingfam}

 In this section, using focal techniques, we prove a proposition 
 which is an essential tool for the proof of Theorem \ref{thm:main}.
 This proposition was certainly known to
the  classical algebraic and projective--differential geometers.
 Since however  we do not have
a suitable reference for it, we give here its complete proof. 

\begin{proposition} \label {prop:foctang} Let $\X$ be a filling  family of lines in $\Pp^ 3$.
Assume that its general member $\ell$ is tangent to a non--developable surface $\Sigma$ at a general point $p$ of it.
Then:
\begin{itemize}
\item [(a)] $p$ is a focus on $\ell$;
\item  [(b)] the contact order of $\ell$ with $\Sigma$ at $p$ is at most $2$;
\item [(c)] if the contact order of $\ell$ with $\Sigma$ at $p$ is $2$, then $p$ is
a focus with multiplicity two on $\ell$.
\end{itemize}
\end{proposition}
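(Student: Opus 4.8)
The plan is to reduce the whole statement to a direct analysis of the binary quadratic form cutting out the two foci on $\ell$. Since $p$ is a general point of $\Sigma$ and the contact points sweep out $\Sigma$, the assignment $z \mapsto p(z)$ is dominant, so near the general point I may use the contact point itself as a local parameter, writing $p = p(u,v)$ for the contact point and $d = d(u,v) \in \C^4$ for a lift of the direction of $\ell_{(u,v)}$, so that $p,d$ span the plane $V_\ell \subset \C^4$ with $\Pp(V_\ell) = \ell$. Following the recipe behind \eqref{eq:focaleq2}, I parametrize the points of $\ell$ as $x = p + t\,d$, with the contact point at $t=0$, and I set $L := (a\cdot -,\, b\cdot -)\colon \C^4 \to \C^2$, whose kernel is $V_\ell$. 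Differentiating the identities $a\cdot p = a\cdot d = b\cdot p = b\cdot d = 0$ in $u$ and $v$ converts the $2\times 2$ focal determinant into $\det(L(p_u + t d_u),\,L(p_v + t d_v))$. As this is alternating in its two arguments and vanishes whenever one of them lies in $\ker L = V_\ell$, it factors through $\wedge^2(\C^4/V_\ell)$, yielding the clean expression
\begin{equation*}
\phi(t) = c\cdot [\,p_u + t\, d_u,\; p_v + t\, d_v,\; p,\; d\,],
\end{equation*}
where $[w_1,w_2,w_3,w_4]$ is the $4\times 4$ determinant and $c\neq 0$. The proposition then amounts to reading off the three coefficients of this quadratic in $t$.

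For parts (a) and (c) I fix adapted affine coordinates $(\xi,\eta,\zeta)$ with $p(0,0)$ the origin, $T_{\Sigma,p} = \{\zeta = 0\}$, $\ell_0$ the $\xi$--axis, and $\Sigma$ locally $\zeta = g(\xi,\eta)$ with $g = \tfrac12(\alpha\xi^2 + 2\beta\xi\eta + \gamma\eta^2) + \cdots$; then $p = (1,u,v,g)$, $p_u,p_v$ are the coordinate tangent vectors, and $d = p_u + m\,p_v$ for the slope function $m$ of the tangent--direction field, with $m(0,0)=0$. At the origin $d = p_u$, so the constant term $[p_u,p_v,p,d]$ has a repeated column and vanishes: this is exactly (a). Conceptually, $V_\ell \subset \widehat{T_{\Sigma,p}}$, so the images of $p_u,p_v$ in the one--dimensional quotient $\widehat{T_{\Sigma,p}}/V_\ell$ are dependent. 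A short determinant computation gives the coefficient of $t$, evaluated at the origin, equal to $-\alpha$ up to the factor $c$, and $\alpha = g_{uu}(0)$ is precisely the second fundamental form of $\Sigma$ evaluated on the direction of $\ell$. Hence $\phi'(0)=0$ if and only if $\ell$ is asymptotic, i.e. if and only if the contact order is at least $2$. Since $\phi \not\equiv 0$ by Proposition \ref{prop:tuttofuoco} (here $r-k = 2$), a contact order equal to $2$ forces $t=0$ to be a root of multiplicity exactly $2$, which is (c); when $\alpha \neq 0$ the same formula shows $t=0$ is a simple root, with the second focus genuinely distinct.

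For part (b) I argue by genericity rather than by foci. A contact order $\geq 3$ means that $\ell$ has at least four--point contact with $\Sigma$ at $p$, so $p$ lies on the flecnodal curve of $\Sigma$. As $\Sigma$ is non--developable this locus is a proper, at most one--dimensional subvariety; equivalently, the flecnodal tangents form an at most one--dimensional family of lines and cannot constitute a $2$--dimensional filling family. Since $p$ is general, it avoids the flecnodal curve, so the contact order is at most $2$.

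The genuinely delicate step is the first one: extracting the factored formula for $\phi(t)$ from \eqref{eq:focaleq2} and correctly identifying the coefficient of $t$ with the second fundamental form, as this is what links the analytic (focal) and geometric (contact order) data and simultaneously produces (a) and (c). The remaining inputs are comparatively soft: the dominance of $z\mapsto p(z)$, which lets the contact point serve as a parameter, comes from the genericity of $p$; the nonvanishing $\phi\not\equiv 0$ is exactly Proposition \ref{prop:tuttofuoco}; and the non--parabolicity ensuring the second focus is distinct in the contact--order--one case is guaranteed by Lemma \ref{lem:one}. Part (b) is the one place where one leaves the focal formalism, relying instead on the non--developability of $\Sigma$.
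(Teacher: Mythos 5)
Your focal setup and parts (a) and (c) are correct, and they follow essentially the paper's own strategy in a parametric rather than dual formulation: rewriting the determinant \eqref{eq:focaleq2} as the binary quadratic $\phi(t)=c\,[\,p_u+td_u,\,p_v+td_v,\,p,\,d\,]$ on $\ell$ is a legitimate reformulation of what the paper does with the identities $a_u\times p=a_v\times p=0$ (which give (a)) and with the reduction of the focal determinant to $x_1^2(a_v\times p_u)(a_{uu}\times p_u)$ in \eqref{eq:focaleq3} (which gives (c)); your identification of the coefficient of $t$ at the contact point with the value of the second fundamental form on the direction of $\ell$ is a correct and rather transparent way to see when $t=0$ is a double root, with Lemma \ref{lem:one} guaranteeing the non--parabolicity you implicitly use.

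The gap is in part (b). You assert that non--developability of $\Sigma$ forces the locus of points admitting a tangent line with at least four--point contact to be a proper, at most one--dimensional subvariety, and you treat this as equivalent to the family of such lines being at most one--dimensional. Neither claim follows from non--developability: a non--developable ruled surface (say a general ruled cubic) has through \emph{every} point a line of unbounded contact, namely the ruling, so the pointwise flecnodal locus is all of $\Sigma$, while the rulings still form only a one--dimensional family --- which also breaks your ``equivalently''. What your argument actually needs is that no non--planar surface carries a two--dimensional filling family of tangent lines meeting it with multiplicity $\geq 4$ at the moving contact point. The only route to this along your lines is the classical theorem that if a general point of $\Sigma$ carries an asymptotic tangent with four--point contact, then the corresponding asymptotic curves are straight lines, so that $\Sigma$ is ruled and those tangents are the one--dimensional family of rulings, contained in $\Sigma$; this theorem is of comparable depth to the proposition itself and is neither stated nor proved in your text, whereas non--developability is only its second--order analogue (every point parabolic implies developable). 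The paper gets (b) for free from the computation you have already set up: since $\phi\not\equiv 0$ on the general $\ell$ by Proposition \ref{prop:tuttofuoco} (the filling hypothesis), the surviving factor $a_{uu}\times p_u=a\times p_{uuu}$ in \eqref{eq:focaleq3} must be nonzero, and this is precisely the third--order obstruction saying the contact order is exactly $2$. In your notation, once $\alpha=0$ the coefficient of $t^2$ in $\phi$ is (up to a nonzero constant) the third--order coefficient of $g$ in the direction of $\ell$, so $\phi\not\equiv 0$ forces it to be nonzero. You should derive (b) this way rather than by the flecnodal dimension count.
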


\begin{proof} The question being local, we may assume that $\X$ is given around $\ell$ as follows. Let $\Sigma$ be locally parametrized around $p$ as $p=p(u,v)$, with $z=(u,v)\in D$, where $D$ is a bidisc.
Then $\ell_z$ is defined by the equations \eqref{eq:retta}, where we may assume that the plane $a\times x=0$ is tangent to $\Sigma$ at $p$, i.e. one has
\begin{equation}\label {eq:ooo}
a\times p=a\times p_u=a\times p_v=0.
\end{equation}
By differentiating the first relation in \eqref{eq:ooo} and taking into account the other two, we find
\begin{equation}\label {eq:rel1}
a_u\times p=a_v\times p=0.
\end{equation}
Taking into account equation \eqref {eq:focaleq2}, (a) immediately follows.

Before proceeding, note that the dual variety $\Sigma^ *$ of $\Sigma$ is a surface, since we are assuming that $\Sigma$ is not developable (see \cite {GH}). This implies that $a, a_u$, and
 $a_v$ are linearly independent.
 
Assume now $\ell$ is an asymptotic tangent line to $\Sigma$ at $p$. We may suppose that $\ell=\ell_z$
has the tangent direction of the vector $p_u$ at $p$. This translates into the relation
\begin{equation}\label{eq:asympto2}
a\times p_{uu}=0
\end{equation}
and $\ell_z$ is parametrically described by 
\begin{equation}\label{eq:asympto}
x=x_0p+x_1p_u.\end{equation}
By differentiating the second equation in \eqref  {eq:ooo} and
taking into account \eqref {eq:asympto2}, we find
\begin{equation}\label {eq:aaa}
a_u\times p_u=0.\end{equation}
Hence the plane $a_u\times x=0$ is distinct from
the tangent plane $a\times x=0$ and contains the line $\ell$. So
$\ell$ is defined by the equations
$$a\times x=a_u\times x=0$$
and the equation \eqref {eq:focaleq2} of the focal locus
becomes
\begin{equation}\label{eq:focaleq2bis}
 \det \left(\begin{matrix} a_u\times x&  a_{v}\times x\cr
 a_{uu}\times x&  a_{uv}\times x\cr
 \end{matrix} \right)=0
\end{equation}

By differentiating the first equation in \eqref  {eq:rel1} and
considering \eqref {eq:aaa}, we get 
\begin{equation}\label {eq:ccc}
a_{uu}\times p=0.\end{equation}
Substituting \eqref {eq:asympto} into \eqref {eq:focaleq2bis}, and taking into account 
\eqref  {eq:rel1}, \eqref  {eq:aaa} and \eqref {eq:ccc},
we find the equation
\begin{equation}\label{eq:focaleq3}
x_1^ 2(a_v\times  p_u)(a_{uu}\times p_u)=0
\end{equation}
which, by the filling property, is not identically zero (see Proposition \ref {prop:tuttofuoco}).

Note that $a_v\times  p_u\neq 0$, because, as we saw, $a_v$ is linearly independent 
from $a$ and $a_u$. In addition we have $a_{uu}\times p_u\neq 0$.
On the other hand, by differentiating \eqref {eq:asympto2}  and \eqref  {eq:aaa} we see that
$$a_{uu}\times p_u=a\times p_{uuu}.$$
Thus one has $a\times p_{uuu}\neq 0$ which proves (b). Summing up, the equation \eqref {eq:focaleq3}
becomes $x_1^ 2=0$, proving (c).
\end{proof}

\begin {remark}\label {rem:coont} {\rm As a consequence of  Proposition \ref {prop:foctang}, 
there is no non--developable surface $\Sigma$ in $\Pp^ 3$ having a 2--dimensional family $\X$ of  non--simple asymptotic tangent lines. The same holds if $\Sigma$ is developable but not a plane. We do not dwell on this now.

Conversely, if $\X$ is a filling family of lines in $\Pp^ 3$, which does not have 
\emph{fundamental points}, i.e. points $p\in \Pp^ 3$ contained in infinitely many lines of $\X$, then 
the focal locus of  $\X$ in $\Pp^ 3$ is a, may be reducible, surface $\Sigma$ and $\X$ is formed by lines which are either bitangents or asymptotic tangents to $\Sigma$. We do  not dwell on this as well.
}\end{remark}

\section{The proofs of the main  theorems}\label{S:Main}

We are now in position to give the:

\begin {proof} [Proof of Theorem \ref {thm:main2}] By Proposition \ref {prop:doublebranch}, it suffices
to prove that $B$ has only double points. In view of Proposition \ref  {prop:multiplebranch}, one has
to show that there is no filling family of lines whose general member $\ell$ is such that 
$\ell_X=n_1x_1+\ldots+n_hx_h$ with $\sum_{i=1}^ h(n_i-1)\geq 3$. This is ensured by \S\, \ref{SS:filling} and by  Proposition \ref  {prop:foctang}. \end{proof} 

Finally, we have the:

\begin{proof}  [Proof of Theorem \ref {thm:main}] It follows from  Theorem \ref {thm:main2} and from the General Projection  Theorem \ref {thm:genpro}.

\end{proof}

\begin{remark}\label{rem:kulkul} {\normalfont Suppose that $\Sigma$ has, off the ordinary singularity locus, 
a finite number of further double points where the germ of $\Sigma$ is analytically equivalent to the one of  
an affine surface in $\CC^3$ with equation $z^2 = h(x,y)$, at the origin, where $h(x,y) = 0$ is a curve sigular 
at the origin. As in \cite{kulkul}, in particular see Lemma 1.4, one proves that the branch curve $B$ 
of a general projection of $\Sigma$ to a plane has again only nodes and cusps besides the singularities 
arising from the projections of the aforementioned double points, where the singularity of $B$ 
is locally analytically  equivalent to the one of the curve $h(x,y) = 0$. This extension of Theorem 
\ref{thm:main2} implies an obvious analogous extension of Theorem \ref{thm:main}. Note however 
that the irreducibility 
statement abount branch and ramification curves may file in this situation. 
}
\end{remark}

\section{Applications to Hilbert scheme dimension computations}\label{S:considerations}

As mentioned in the introduction, Theorem \ref   {thm:main} has  important applications
in the theory of surfaces. 
In this section we recall one, namely Enrique's approach to the computation
of the dimension of Hilbert schemes and moduli spaces of surfaces (see \cite {enr}).

Let $S\subset \Pp^ r$ and 
$\varphi:= \varphi_2 : S \to \Pp^2$ be as usual.
One has the exact sequence
\begin{equation}\label{eq:tgS4}
0 \to T_S \stackrel{d\varphi}{\longrightarrow} \varphi^*(T_{\Pp^2}) \to \N_{\varphi} \to 0,
\end{equation}
defining  $ \N_{\varphi}$, which is called 
the {\em normal sheaf} to the map $\varphi$, fitting also in the 
so called {\em Rohn exact sequence}
\begin{equation}\label{eq:tgS5}
0 \to \oplus_{i=1}^{r-2} \Oc_S(H) \to  \N_{S/\Pp^{r}} \to \N_{\varphi} \to 0
\end{equation}(see, e.g. \cite[p. 358, formula (2.2)]{Cil}). 

The sheaf $ \N_{\varphi}$ has torsion, being supported on $R \subset S$, the ramification
locus of $\varphi$. The morphism $\phi := \varphi|_{R} : R \to B$ is birational onto the branch curve, 
which has only nodes and cusps as singularities by Theorem \ref{thm:main}, and
$$\deg(R)=\deg(B)=2(d+g-1).$$
Let $G \subset R$ be the divisor formed by all points $p$ such that $\phi(p)$ is
a cusp of $B$, each counted
with multiplicity one, and let $i : R \hookrightarrow S$ be the inclusion of $R$ in $S$.
By the exact sequence \eqref{eq:tgS4} and the analogous one for the map $\phi$, we get the commutative diagram

\begin{equation}\label {eq:diag1}
\begin{array}{rcccccl}
&  &  &  &  & 0 & \\
&  &  &  &  & \downarrow & \\
 & 0 &  &  &  & \mathcal G \cong \Oc_G & \\
&  \downarrow  &  &  &  & \downarrow & \\
0 \to & T_{R} & \stackrel{d \phi}{\longrightarrow} & \phi^*(T_{\Pp^2}) & \to & \N_{\phi} & \to 0 \\
 & \downarrow & & ||&  & \downarrow &  \\
  & i^*(T_S) & \stackrel{d \varphi}{\longrightarrow} & \phi^*(T_{\Pp^2}) & \to & i^*(\N_{\varphi}) & \to 0 \\
  &  & & &  & \downarrow &  \\
 &  & & &  & 0 &
\end{array}
\end{equation}
Consider now the following 
diagram (see \cite[p. 24]{AC}):

\begin{equation}\label {eq:diag2}
\begin{array}{rcccccl}
&  &  &  &  & 0 & \\
&  &  &  &  & \downarrow & \\
 & 0 &  &  &  & \mathcal G \cong \Oc_G & \\
&  \downarrow  &  &  &  & \downarrow & \\
0 \to & T_{R} & \stackrel{d \phi}{\longrightarrow} & \phi^*(T_{\Pp^2}) & \to & \N_{\phi} & \to 0 \\
 & \downarrow & & ||&  & \downarrow &  \\
  & T_{R} \otimes \Oc_{R} (G) & \stackrel{d \phi}{\longrightarrow} & \phi^*(T_{\Pp^2})
& \to & \N_{\phi}'  & \to 0 \\
  &  & & &  & \downarrow &  \\
 &  & & &  & 0 &
\end{array}
\end{equation}
where $\N_{\phi}' $ is a line bundle on $R$. 
Note that these two diagrams imply 
\begin{equation}\label {eq:equal}
h^ j(S,\N_\varphi)=h^ j(R,i^ *(\N_\varphi))=h^j(R,\N'_\phi), \quad j=0,1,2.
\end{equation}

One has
$${\rm deg}(\N_{\phi}) = \deg (\phi^*(T_{\Pp^2})) - {\rm deg}(T_{R})= 3 {\rm deg}(R) + \deg(K_R)=
6(d+g-1)+ \deg(K_R).$$
 The classical formula for the number of cusps of the branch curve, in case this has only nodes and cusps, gives
 $$
{\rm deg}(Z) = 3(d+K_S^2 - 4 \chi(\Oc_S) +6(g-1))$$
(cf. formulas (a), (b) and (d) 
in \cite[Prop. 2.6]{CMT}; cf. standard references as \cite[p. 182]{enr} and \cite{I}).

Thus
$${\rm deg}(\N_{\phi}') = \deg (\N_{\phi}) - {\rm deg}(Z)=
3\left(d-K_S^ 2-4(g-1)+4\chi(\Oc_S) \right)+\deg(K_R).$$Since 
\begin{equation}\label{eq:paR}
p_a(R)=9(g-1)+K_S^ 2+1
\end{equation}one has  
\begin{equation}\label {eq:equal2}
h^ 0(S,\N_\varphi)=h^ 0(R, \N'_\phi)=3d-3(g-1)-2K_S^ 2+12\chi(\Oc_S)+h
\end{equation}
where $h=h^ 1(R,\N'_\phi)=h^ 1(S,\N_\varphi)$. Note that
$\N_{\phi}'$ is  
non-special, hence $h=0$, if
\begin{equation}\label {eq:nonspec}
d-K_S^ 2-4(g-1)+4\chi(\Oc_S)>0.
\end{equation}Otherwise, in case $\N_{\phi}'$ is special, note that $R$ is not hyperelliptic, 
since $\Oc_R(H)$ is clearly special. Then, taking into account \eqref{eq:paR}, one has 
\begin{equation}\label {eq:bound}
h\leq \frac{3}{2}(K_S^2 - d) + 6(g-1 - \chi(\Oc_S)) + \frac{1}{2},
\end{equation}
by Clifford's theorem. More precisely, 

Let now 
$$
h^1(S, \Oc_S(H))  = h(S)
$$
be the {\em speciality} of $S$. 

From  \eqref{eq:tgS5}, \eqref{eq:diag1}, \eqref{eq:diag2}, \eqref {eq:equal}, \eqref {eq:equal2} and \eqref{eq:bound} 
we conclude that:

\begin{theorem} In the above setting, one has
$$
h^0(S, \N_{S/\Pp^{r}}) \leq (r-2)(r+1) + h^0(R,\N'_{\phi}) =$$
$$ (r-2)(r+1) + 3d-3(g-1)-2K_S^ 2+12\chi(\Oc_S)+h.$$
If  $\N_{\phi}'$ is non-special (in particular if \eqref {eq:nonspec}
holds) one has:
\begin{itemize}
\item [(i)]  $h^1(S, \N_{S/\Pp^{r}})  \leq (r-2) h(S),$
\item [(ii)] $h^2(S,\N_{S/\Pp^{r}}) = (r-2) \, h^2(\Oc_S(H)).$
\end{itemize}
The above inequalities become equalities if and only if the map $H^0(\N_{S/\Pp^{r}}) \to H^0(\N_{\varphi})$ 
arising from \eqref{eq:tgS5} is surjective. This is the case if $h(S)=0$.

\noindent
If  $\N_{\phi}'$ is special, one has 
$$h^0(S, \N_{S/\Pp^{r}})\leq (r^ 2-r-1) + \frac 12(3d - K_S^ 2-1) + 3 (g-1)+ 6 \chi(\Oc_S).$$
\end{theorem}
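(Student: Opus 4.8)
The plan is to extract all three numbers $h^j(S,\N_{S/\Pp^r})$ from the long exact cohomology sequence of the Rohn sequence \eqref{eq:tgS5}, since every other ingredient has already been prepared. Set $\Ef:=\bigoplus_{i=1}^{r-2}\Oc_S(H)$, so that $h^j(S,\Ef)=(r-2)\,h^j(S,\Oc_S(H))$ for all $j$; in particular $h^0(S,\Ef)=(r-2)(r+1)$ (using that $S$ is linearly normal, i.e. $h^0(S,\Oc_S(H))=r+1$), $h^1(S,\Ef)=(r-2)h(S)$ and $h^2(S,\Ef)=(r-2)h^2(S,\Oc_S(H))$. Since $\N_\varphi$ is supported on the curve $R$, the identities \eqref{eq:equal} give $h^2(S,\N_\varphi)=h^2(R,\N_{\phi}')=0$, while $h^0(S,\N_\varphi)$ and $h^1(S,\N_\varphi)=h$ are recorded in \eqref{eq:equal} and \eqref{eq:equal2}. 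Writing out the cohomology sequence of \eqref{eq:tgS5},
\begin{multline*}
0\to H^0(\Ef)\to H^0(\N_{S/\Pp^r})\xrightarrow{\ \alpha\ } H^0(\N_\varphi)\xrightarrow{\ \delta\ } H^1(\Ef)\to H^1(\N_{S/\Pp^r})\\
\to H^1(\N_\varphi)\to H^2(\Ef)\to H^2(\N_{S/\Pp^r})\to 0,
\end{multline*}
reduces the theorem to a diagram chase.

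First I would read off the $h^0$ bound: the injection $H^0(\Ef)\hookrightarrow H^0(\N_{S/\Pp^r})$ together with exactness at the middle gives $h^0(\N_{S/\Pp^r})=h^0(\Ef)+\rk(\alpha)\le h^0(\Ef)+h^0(\N_\varphi)$, which is $(r-2)(r+1)+h^0(R,\N_{\phi}')$; invoking \eqref{eq:equal2} turns this into the first displayed estimate. Assume now $\N_{\phi}'$ non--special, so that $h=h^1(\N_\varphi)=0$. Then $H^1(\N_{S/\Pp^r})$ is a quotient of $H^1(\Ef)$, which is (i), and the tail collapses (using also $H^2(\N_\varphi)=0$ and $H^3(\Ef)=0$) to an isomorphism $H^2(\Ef)\cong H^2(\N_{S/\Pp^r})$, which is (ii). Both the $h^0$ bound and (i) are controlled by $\rk(\alpha)$: exactness gives $h^1(\N_{S/\Pp^r})=h^1(\Ef)-\bigl(h^0(\N_\varphi)-\rk(\alpha)\bigr)$, so the two inequalities become equalities simultaneously, precisely when $\alpha$ is surjective; and if $h(S)=0$ then $H^1(\Ef)=0$ forces $\delta=0$, hence $\alpha$ surjective, as asserted.

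For the special case I would simply feed the Clifford estimate \eqref{eq:bound} for $h=h^1(R,\N_{\phi}')$ into the first displayed bound: substituting $h\le\tfrac32(K_S^2-d)+6(g-1-\chi(\Oc_S))+\tfrac12$ into $(r-2)(r+1)+3d-3(g-1)-2K_S^2+12\chi(\Oc_S)+h$ and collecting terms yields $(r^2-r-1)+\tfrac12(3d-K_S^2-1)+3(g-1)+6\chi(\Oc_S)$, the claimed bound. I do not anticipate a conceptual obstacle: the delicate input, namely that $B$ has only nodes and cusps and hence that the cusp count behind \eqref{eq:equal2} is valid, has already been secured through Theorem \ref{thm:main}. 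The only steps needing genuine care are the arithmetic simplification in the special case and the two structural facts $h^0(S,\Oc_S(H))=r+1$ and $h^2(S,\N_\varphi)=0$, which are routine.
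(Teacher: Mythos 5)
Your proposal is correct and coincides with the paper's (largely implicit) argument: the theorem is obtained exactly by running the long exact cohomology sequence of the Rohn sequence \eqref{eq:tgS5}, feeding in the identifications \eqref{eq:equal}, \eqref{eq:equal2} and the Clifford bound \eqref{eq:bound}, together with the vanishing of $H^2$ of a sheaf supported on a curve. Your diagram chase, the characterization of equality via surjectivity of $H^0(\N_{S/\Pp^r})\to H^0(\N_\varphi)$, and the arithmetic in the special case all check out.
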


\begin{remark}\label {rem:canonicalbis} {\rm Suppose that $H\sim K_S$, i.e. that the canonical system is very ample. 
Let $q(S) = h^1(S, \Oc_S)$ be, as usual, the {\em irregularity} of $S$. 
The above theorem implies that  the number $M(S)$ of moduli of $S$ is bounded above by 
$4K_S^ 2 + 3 \chi(\Oc_S) - 3 q(S) + 4$, if $h>0$, and by $12\chi(\Oc_S) - 3 p_g- 1 - 2 K_S^2$. 
In the latter case, since $K_S^2 \geq 3p_g - 7$ by Castelnuovo's inequality (see e.g. \cite{bpv}), 
one has $M(S) \leq 3p_g - 12 q + 25$.    
For the problem of finding good upper bound for the number of moduli of a surface, see \cite {cat1}, \cite {cat2}.
}\end{remark}



\end{document}